\def\NAT@spacechar{~}
\crefname{figure}{Figure}{Figures}
\Crefname{figure}{Figure}{Figures}
\newtheorem{definition}{Definition}[section]
\newtheorem{claim}{Claim}
\newtheorem{proposition}[definition]{Proposition}
\newtheorem{theorem}[definition]{Theorem}
\newtheorem{lemma}[definition]{Lemma}
\newtheorem{fact}[definition]{Fact}
\newtheorem{conjecture}[definition]{Conjecture}
\newtheorem{question}[definition]{Question}
\numberwithin{equation}{section}
\newcommand{\comment}[1]{}
\newcommand{\cP}{\mathcal{P}}
\newcommand{\cS}{\mathcal{S}}
\newcommand{\cX}{\mathcal{X}}
\newcommand{\cY}{\mathcal{Y}}
\newcommand{\ora}{\overrightarrow}
\renewcommand{\epsilon}{\varepsilon}
\newcommand{\COMMENT}[1]{}
\renewcommand{\COMMENT}[1]{\footnote{\textcolor{blue!70!black}{#1}}} 
\title{Subdivisions of digraphs in tournaments}
\author[A.~Gir\~{a}o]{Ant\'onio Gir\~{a}o}
\address[Ant\'onio Gir\~{a}o]{School of Mathematics, University of Birmingham, 
Edgbaston, Birmingham, B15 2TT, United Kingdom.
}
\email{giraoa@bham.ac.uk}
\author[K.~Popielarz]{Kamil Popielarz}
\email{kamil.popielarz@gmail.com}
\author[R.~Snyder]{Richard Snyder}
\address[Richard Snyder]{Karlsruhe Institute of Technology, Karlsruhe, Germany.}
\email{richard.snyder@kit.edu}
\thanks{The first author wishes to acknowledge support by the EPSRC, grant. no. EP/N019504/1.}
\date{\today}
\begin{document}


\maketitle

\onehalfspacing

\begin{abstract}

\setlength{\parskip}{\medskipamount}
    \setlength{\parindent}{0pt}
    \noindent
We show that for every positive integer $k$, any tournament with minimum out-degree at least $(2+o(1))k^2$ contains a subdivision of the complete directed graph on $k$ vertices, which is best possible up to a factor of $8$. This may be viewed as a directed analogue of a theorem proved by Bollob\'as and Thomason, and independently by Koml\'os and Szemer\'edi, concerning subdivisions of cliques in graphs with sufficiently high average degree. We also consider the following problem: given $k$, what is the smallest positive integer $f(k)$ such that any $f(k)$-vertex tournament contains a $1$-subdivision of the transitive tournament on $k$ vertices? We show that $f(k)= O\left (k^2\log^3 k\right)$ which is best possible up to the logarithmic factors.

\end{abstract}

\section{Introduction}\label{sec:intro}

The complete directed graph on $k$ vertices, denoted by $\ora{K}_k$, is a directed graph in which every pair of vertices is connected by an edge in each direction. As usual, we say that a tournament $T$ contains a subdivision of $\ora{K}_k$ if it contains a set $B$ of $k$ vertices and a collection of $2\binom{k}{2}$ pairwise internally vertex disjoint directed paths joining every ordered pair of vertices in $B$. We denote such a subdivision by $T\ora{K}_k$, and the vertices in $B$ are called the \emph{branch vertices} of the subdivision. Our main aim in this note is to investigate the density conditions under which a tournament must contain a $T\ora{K}_k$, where by `density' we mean specifically minimum out-degree.

The undirected analogue of this line of research has been studied extensively. The story begins with Mader~\cite{Mader1}, who showed that any graph with sufficiently large average degree contains a subdivision of the complete graph on $k$ vertices. Later, in \cite{Mader2} he showed that average degree at least $c2^k$ suffices. It is not hard to show that the random graph $G(n, 1/2)$ with high probability contains a subdivision of a clique with $\sqrt{n}/10$ vertices (and with high probability does not contain a subdivision of a clique on at least $10\sqrt{n}$ vertices). This motivated 
Mader~\cite{Mader1}, and independently Erd\H{o}s and Hajnal~\cite{ErdosHajnal}, to conjecture that any graph with average degree at least $ck^2$, for some constant $c$, should necessarily contain a subdivision of $K_k$. This was later established by Bollob\'as and Thomason~\cite{BollobasThomason}, and independently by Koml\'os and Szemer\'edi~\cite{KomlosSzemeredi}.

Given these results, it is natural to consider the problem in the directed setting, with a suitable density condition. It is not hard to see that average degree is not the right density condition: a transitive tournament has large average degree, yet clearly cannot even contain a subdivision of $\ora{K}_2$. But what about large minimum out/in-degree? This, again, does not hold, but the reason is more subtle. Indeed, Thomassen~\cite{Thomasseneven} constructed digraphs on $n$ vertices with minimum out-degree at least $c\log n$ which contain no directed cycles of even length; since any subdivision of $\ora{K}_3$ must contain an even cycle, these digraphs do not contain $T\ora{K}_k$ for any $k \ge 3$.
On the other hand, K\"{u}hn, Osthus and Young~\cite{DandD} showed that any digraph on $n$ vertices with minimum out-degree $d$ contains a subdivision of a complete digraph of order $\lfloor d^{2}/(8n^{3/2})\rfloor$, implying that any digraph on $n$ vertices with minimum out-degree $\sqrt{8k}n^{3/4}$ contains a subdivision of a complete digraph on $k$ vertices.  

The above discussion has left out the case of tournaments: Is it true that tournaments with large enough minimum out-degree contain a subdivision of the complete directed graph? The  first and last author~\cite{girao-snyder} answered this question in the affirmative: for every positive integer $k$ there is an $m(k)$ such that any tournament with minimum out-degree at least $m(k)$ contains a subdivision of the complete directed graph on $k$ vertices. This result was an important step in the proof of a partial resolution of a conjecture of Pokrovskiy~\cite{Pokrovskiytourn}.
They proved this with $m(k)$ doubly-exponential in $k^2$. Our main result in this paper is to show that we may actually take $m(k)$ to be merely quadratic in $k$. To state our main theorem, let us introduce the following function defined for every integer $k \ge 2$:
\[
    d(k) = \min\{ m:\text{any tournament } T \text{ with } \delta^{+}(T)\geq m \text{ contains a } T\overrightarrow{K}_k\}.
\]

For example, observe that $d(2) = 1$. We are able to determine $d(k)$ for all $k \ge 3$ up to a factor of $8$.
\begin{theorem}\label{thm:main}
We have that
\[
    k^2/4 \leq d(k)\leq (2+o(1))k^2, 
\]
where the $o(1)$ term goes to zero as $k \to \infty$.
\end{theorem}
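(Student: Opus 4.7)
Any subdivision $T\ora{K}_k$ must contain at least $k+\binom{k}{2}=\binom{k+1}{2}$ vertices: besides the $k$ branch vertices, each of the $\binom{k}{2}$ paths whose direction is opposite to the corresponding edge of the sub-tournament induced on the branches has length at least~$2$, and so contributes at least one distinct internal vertex. Consequently no tournament on fewer than $\binom{k+1}{2}$ vertices contains $T\ora{K}_k$. Taking $T$ to be a near-regular tournament on $n=\binom{k+1}{2}-1$ vertices (these exist for every $n$) gives $\delta^+(T)\ge \lfloor (n-1)/2\rfloor \ge k^2/4$, establishing $d(k)\ge k^2/4$.

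\textbf{Upper bound --- plan.}
Let $T$ be a tournament with $\delta^+(T)\ge (2+\varepsilon)k^2$. First, replace $T$ by its bottom strong component, which inherits the minimum out-degree since every vertex of a bottom strong component has all its out-neighbours inside it; so we may assume $T$ is strongly connected. Next, greedily select $k$ branch vertices $v_1,\ldots,v_k$ together with pairwise disjoint private ``reservoirs'' $A_i\subset N^+(v_i)\setminus\{v_1,\ldots,v_k\}$, each of size about $k-1$ (one private out-neighbour for each potential target). The greedy choice succeeds because at each step the set of reserved vertices has size $O(k^2)$, still much less than $|N^+(v_i)|\ge (2+\varepsilon)k^2$. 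Finally, for each ordered pair $(v_i,v_j)$ build a directed path $P_{ij}$: if $v_i\to v_j$ in $T$, the edge itself suffices; otherwise exit $v_i$ via the dedicated reservoir vertex $w_{ij}\in A_i$ and extend it to a short directed path ending at $v_j$ through the ``bulk'' of $T$ that is disjoint from everything used so far.

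\textbf{Main obstacle.}
The real difficulty lies in the last step: the $k^2-k$ paths must be pairwise internally vertex-disjoint. I would process the $\binom{k}{2}$ reverse pairs one at a time, and the technical heart of the proof should be a short-path/linkage lemma of roughly the form: in a strongly connected tournament with $\delta^+\ge (2+o(1))k^2$, after removing any set of at most $O(k^2)$ vertices, the remaining digraph still contains a directed path of length at most some absolute constant (I expect $3$) between any two prescribed vertices. The factor~$2$ in the bound corresponds precisely to two budgets of size $\sim k^2$: one for the branches together with their exit reservoirs $A_i$, and another for the final connector vertices that complete the reverse paths. The technical work will be to show that the leftover tournament remains sufficiently rich --- in particular, strongly connected with still-large minimum out-degree --- after each round of deletions, so that the iterative linkage never gets stuck; the $o(1)$ slack in the hypothesis is what affords us this room.
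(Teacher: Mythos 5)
Your lower bound is fine and is essentially the paper's own argument: a $T\ora{K}_k$ occupies at least $\binom{k}{2}+k$ vertices, since for each unordered pair of branch vertices at least one of the two required paths cannot be a single edge of the tournament, so a near-regular tournament on fewer vertices (minimum out-degree about $k^2/4$) contains no such subdivision. The upper bound, however, is only a plan, and its key ingredient --- which you yourself flag as "the technical heart" and defer --- is false. The hoped-for linkage lemma, that in a strongly connected tournament with $\delta^+ \ge (2+o(1))k^2$ every ordered pair of prescribed vertices is joined by a directed path of bounded length even after deleting $O(k^2)$ vertices, fails already with no deletions at all. Take $V=\{v_1,\dots,v_n\}$ with $v_i\to v_j$ whenever $0<j-i\le d$ or $i-j>d$, where $d=2k^2$ and $n$ is large: every out-degree is at least $d$ (for $i\le n-d$ the vertex $v_i$ beats $v_{i+1},\dots,v_{i+d}$, and for larger $i$ it beats $n-d-1$ vertices), the tournament is strongly connected, yet every edge either advances the index by at most $d$ or decreases it, so any directed path from $v_1$ to $v_n$ has length at least $(n-1)/d$, which is unbounded. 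Hence no absolute-constant-length linkage between arbitrarily prescribed pairs exists, and a greedy choice of branch vertices with private reservoirs cannot be completed by any such lemma; this is not a repairable technicality but precisely the difficulty the theorem has to overcome (the paper's remark about the cyclic blow-up of transitive tournaments makes the same point for length-$2$ paths).

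What the paper does instead is structurally different and none of it appears in your proposal. The branch set is not arbitrary: by \Cref{lem:degrees} it is chosen so that all its in-degrees lie in a window of width $2k^{7/4}$. With this balance, \Cref{lem:dichotomy} gives a dichotomy: either a greedy embedding already contains many paths of length $2$ or $3$, or the stuck pair $(x,y)$ forces $N^+(x)\cap N^-(y)$ into the partial subdivision and produces a small cut $U$ whose source component $S$ dominates the rest and satisfies $|S|\ge |U|+k$. The proof of \Cref{thm:embedding} iterates this, peeling off nested cuts $U_i$ and sources $S_i$; minimality of each $U_i$ yields the Hall-type expansion $|N^+(X)\cap S_i|\ge |X|/2$ (\Cref{claim:expansion}), hence matchings from the cuts into the sources, and all outstanding pairs are routed as internally disjoint length-$3$ paths of the form $x\to U\to S\to y$ (\Cref{lem:outnbrs-embedding}), using that each $S_i$ dominates the final subtournament containing the branch vertices. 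This cut-peeling-plus-matching machinery is exactly what replaces your false linkage lemma; also, the factor $2$ in $2k^2$ arises from bounding $d^+_{T_{t+1}}(v)$ by roughly $m\le k^2$ plus the $2\binom{k}{2}$ out-neighbours needed outside $T_{t+1}$, not from the two budgets you describe. As written, your argument establishes only the lower bound.
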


The lower bound is simple: any $k^2/4$-regular tournament on $k^2/2$ vertices cannot contain a $T\ora{K}_k$ simply because such a subdivision has at least $\binom{k}{2} + k > k^2/2$ vertices. This is true, for example, of a random tournament on $k^2/2$ vertices, as such a tournament with high probability has minimum out-degree $(1 - o(1))k^2/4$. We do not know if there are better constructions, and we leave the exact determination of $d(k)$ as an open problem.

Finally, we consider a similar problem for embedding subdivisions of transitive tournaments. Recall that a tournament is \emph{transitive} if there is an ordering of the vertices such that every edge goes in the same direction.
We denote by $T_k$ the transitive tournament on $k$ vertices, and we denote by $TT_k$ any subdivision of $T_k$.
In the context of embedding subdivisions of transitive tournaments in general directed graphs, Scott~\cite{AlexScottSubdivision}, answering a question of Jagger~\cite{JaggerSubdivision}, showed that for $r \geq 2$ and $n \geq n(r)$ every directed graph on $n$ vertices with more edges than the $r$-partite Tur\'{a}n graph $T(r, n)$ contains a $TT_{r+1}$.
As for minimum degree conditions, Mader~\cite{MaderConjecture} conjectured that for all $k$ there is $f(k)$ such that any digraph with minimum outdegree $f(k)$ contains a subdivision of $T_{k}$.
This conjecture remains open to this day, even for $k = 5$.

Let $T(k)$ denote the smallest integer such that any tournament on $T(k)$ vertices contains a transitive tournament of order $k$. 
A well-known theorem of Erd\H{o}s and Moser~\cite{erdos-moser} states that $2^{(k-1)/2} \le T(k) \le 2^{k-1}$. 
In particular, any tournament on at least $2^{k-1}$ vertices contains a transitive subtournament on $k$ vertices.
If instead of finding a copy of a transitive tournament we allow each edge to be replaced by a directed path of length at most $3$, then the following result holds.
\begin{restatable}{theorem}{transsubdivision}
\label{thm:trans-subdivision}
There is a constant $C > 0$ such that the following holds. For all $k \ge 2$, any tournament on at least $Ck^2$ vertices contains a $TT_k$, where each directed path in the subdivision has length at most $3$. Moreover, this is tight up to the multiplicative constant. 
\end{restatable}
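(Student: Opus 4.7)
For the upper bound, let $T$ be a tournament on $n \geq Ck^2$ vertices, where $C$ is a sufficiently large absolute constant. The plan is to treat it by a median-ordering argument followed by a greedy path-building procedure. I would fix a median ordering $u_1 < \cdots < u_n$ of $T$ (a linear order maximising the number of forward edges) and partition the vertex set into $k$ consecutive blocks $B_1, \ldots, B_k$ of size $\Theta(Ck)$ each. Median orderings have the useful property that between any two blocks $B_i, B_j$ with $i<j$, at least half of the edges go from $B_i$ to $B_j$. By an averaging and pigeonhole argument within each block, I would then select a branch vertex $v_i \in B_i$ whose out-neighbourhood meets each later block $B_j$ in at least a constant fraction of $|B_j|$, and whose in-neighbourhood meets each earlier block similarly.

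The next step is to assemble the $\binom{k}{2}$ internally disjoint short paths pair by pair. For each $i<j$ I would first use the direct edge $v_i\to v_j$ if it is present (length $1$), then look for a length-$2$ path $v_i \to x \to v_j$ through an unused intermediate vertex, and only as a last resort a length-$3$ path $v_i \to x \to y \to v_j$. The quantitative engine is the claim that for each pair the number of admissible short paths is $\Omega(k^2)$---a consequence of the balanced inter-block edge counts---so that even after deleting the at most $2\binom{k}{2}<k^2$ vertices already committed to earlier paths there is still an option left, provided $C$ is chosen large enough to absorb the implicit constants. Allowing length-$3$ paths is essential, since the intersection $N^+(v_i)\cap N^-(v_j)$ governing length-$2$ paths can a priori be empty; this is precisely why the $1$-subdivision result mentioned in the introduction carries an extra polylogarithmic factor.

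For the matching lower bound a deterministic construction is needed, since a uniformly random tournament on $\Theta(k^2)$ vertices already contains huge numbers of $TT_k$-subdivisions with short paths (by a routine first-moment estimate). My plan is to use a suitable blow-up of a small auxiliary tournament---for example, a transitive backbone on $\Theta(k)$ parts of size $\Theta(k)$ with carefully chosen intra-part orientations---arranged so that every attempted choice of $k$ branch vertices forces some pair to require more internal vertices than the construction actually supplies. The hardest part of the whole argument is the disjointness step in the upper bound: showing that the greedy path-selection never gets stuck is exactly what dictates how large $C$ must be and, correspondingly, what forces the allowance of length-$3$ rather than length-$2$ paths in the statement.
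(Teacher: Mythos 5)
Your upper-bound argument has a genuine gap at its quantitative core. First, the median-order property you invoke is false: in the blow-up $C_3[T_m]$ of a cyclic triangle by transitive tournaments of size $m$ (parts $A\to B\to C\to A$), the ordering listing $A,B,C$ consecutively is a median ordering (the $m^2$ backward edges form a minimum feedback arc set, since each of the $m^3$ one-vertex-per-part triangles must lose an inter-part edge and each inter-part edge lies in only $m$ of them), yet \emph{every} edge between the first and last thirds points backwards. So ``at least half of the edges go from $B_i$ to $B_j$'' is not something a median order gives you, and the claimed $\Omega(k^2)$ admissible short paths per pair does not follow. More fundamentally, your greedy procedure has no way to continue when a pair $(x,y)$ admits \emph{no} directed $x$--$y$ path of length at most $3$ among unused vertices, which happens exactly when, outside the (possibly tiny) sets $N^+(x)\cap N^-(y)$ and $N^-(x)\cap N^+(y)$, all edges go from the common in-neighbourhood of $x,y$ to their common out-neighbourhood; no choice of branch vertices by blocks rules this out. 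The paper's proof confronts precisely this case: it picks a $4$-nearly-regular branch set with nearly equal in-degrees, and when the greedy embedding gets stuck at $(x,y)$ it deletes the two small sets above, observes that the remainder splits as $B\to A$ with both sides of size at least $|T|/6$, and then finds subdivisions of $T_{3k/5}$ in $B$ and $T_{2k/5}$ in $A$ by induction, concatenating them across the complete $B\to A$ orientation. Some such dichotomy-plus-recursion (or another mechanism handling the ``no short path'' case) is what your sketch is missing, and it is the heart of the proof, not a constant-chasing detail.

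The lower bound is also not established, and your reason for rejecting a random construction is mistaken. A uniformly random tournament on $k^2/10$ vertices does work: with high probability every $k$-subset induces a subtournament at edit distance at least $k^2/6$ from transitive, so in any purported $TT_k$ at least $k^2/6$ of the $\binom{k}{2}$ pairs must be joined by paths of length at least $2$, forcing at least $k^2/6$ internal vertices --- more vertices than the tournament has (this is exactly the paper's argument, and it needs no restriction on path lengths). Your first-moment intuition that random tournaments on $\Theta(k^2)$ vertices contain many $TT_k$'s is only valid when the constant is large; for a small constant there is simply no room. The deterministic blow-up you gesture at is not specified beyond restating the property it is supposed to have, so as written neither direction of the theorem is proved.
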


It is natural to ask if a similar lower bound on the number of vertices allows us to embed $1$-\emph{subdivisions}: subdivisions where each edge is replaced by a directed path of length $2$. 
An old conjecture of Erd\H{o}s, confirmed by Alon, Krivelevich and Sudakov~\cite{AlonKrivSudak}, states that any graph on $n$ vertices and at least $\varepsilon n^2$ edges contains a $1$-subdivision of a complete graph on $c(\varepsilon)\sqrt{n}$ vertices (in fact, they show that this holds with $c(\varepsilon) = O(\varepsilon$)). We obtain a partial directed analogue of this result, up to $\log$ factors.

\begin{restatable}{theorem}{transonesubdivision}
\label{thm:trans-1-subdivision}
Any tournament on at least $Ck^{2}\log^3 k$ vertices contains a $1$-subdivision of $T_k$.
\end{restatable}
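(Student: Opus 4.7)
My plan is a directed analogue of the Alon--Krivelevich--Sudakov embedding of topological cliques, reduced to an application of Hall's marriage theorem. The key structural reduction is that the $k$ branch vertices of a $1$-subdivision of $T_k$ do \emph{not} themselves need to form a transitive sub-tournament of $T$, since the transitive structure is carried entirely by the middle vertices. Hence it suffices to find vertices $v_1,\dots,v_k$ in $T$ such that the family
\[
    W_{ij} \;:=\; \bigl(N^+(v_i)\cap N^-(v_j)\bigr)\setminus\{v_1,\dots,v_k\},\qquad 1\le i<j\le k,
\]
admits a system of distinct representatives. By Hall's theorem this is equivalent to $\bigl|\bigcup_{(i,j)\in S}W_{ij}\bigr|\ge|S|$ for every $S\subseteq\binom{[k]}{2}$, and in particular is automatic once $|W_{ij}|\ge\binom{k}{2}$ for each $i<j$.

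To find such branches I would proceed in two stages. First, I would iteratively discard any vertex of unusually small $d^+$ or $d^-$; a pigeonhole argument shows this terminates at a sub-tournament $T'$ on $n'=\Omega(n/\log k)$ vertices with $d^+(v),d^-(v)=\Theta(n')$ for all $v\in V(T')$. Double-counting then gives
\[
    \sum_{u\ne v}\bigl|N^+(u)\cap N^-(v)\bigr|\;=\;\sum_{w\in V(T')}d^+(w)d^-(w)\;=\;\Theta\bigl((n')^3\bigr),
\]
so the \emph{average} of $|N^+(u)\cap N^-(v)|$ over ordered pairs in $T'$ is $\Theta(n')$. Second, via a cleaning/dependent-random-choice argument, I would remove vertices of $T'$ that participate in many ``low-common-middle'' pairs, leaving a sub-tournament $T''$ of size $\Omega(n'/\mathrm{polylog}\,k)$ in which $|N^+(u)\cap N^-(v)|\gg k^2$ for \emph{every} ordered pair $(u,v)$ in $T''$. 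Choosing $v_1,\dots,v_k$ arbitrarily from $T''$ then satisfies Hall's condition, and Hall's theorem produces the required middles and hence the desired $1$-subdivision of $T_k$.

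The main obstacle is the bad-pair cleaning step. Even after the degree cleaning, a tournament can contain many pairs $(u,v)$ with small $|N^+(u)\cap N^-(v)|$ (e.g.\ consecutive pairs in a circulant or transitive tournament), and one must show that such bad pairs can be ``localised'' onto a small vertex set which is then removed. This is where most of the technical work lies: I would track a weighted second-moment of the common-middle statistic and use a Markov/pigeonhole argument to iteratively strip bad-pair-heavy vertices while retaining $\Omega(n/\mathrm{polylog}\,k)$ of the vertex set. The three powers of $\log k$ in the theorem correspond to the polylogarithmic slack incurred at (i) the degree cleaning, (ii) the bad-pair cleaning, and (iii) the buffer between the guaranteed size of $W_{ij}$ and the Hall threshold $\binom{k}{2}$. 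Finally, the bound is tight up to polylogarithmic factors, as any $1$-subdivision of $T_k$ has $k+\binom{k}{2}=\Theta(k^2)$ vertices.
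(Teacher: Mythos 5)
Your reduction to Hall's theorem (the branch vertices need not induce a transitive subtournament; it suffices that the forward common-middle sets $W_{ij}$ are large) is sound and matches the endgame of the actual proof, but both cleaning stages on which your plan rests are unachievable, and this is exactly where the difficulty of the theorem lies. Stage one fails already for the transitive tournament: every subtournament of a transitive tournament is transitive, hence has a vertex of out-degree $0$, so the iterative removal of low-degree vertices deletes everything, and no subtournament with $d^+(v),d^-(v)=\Theta(n')$ for all $v$ exists at any scale (and if you instead measure degrees in $T$, the identity $\sum_{u\ne v}|N^+(u)\cap N^-(v)|=\sum_w d^+(w)d^-(w)$ no longer applies to the subtournament). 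Stage two is worse: the target ``$|N^+(u)\cap N^-(v)|\gg k^2$ for \emph{every ordered pair} of $T''$'' is impossible for any $T''$ with at least two vertices inside a transitive tournament, since for each pair one of the two directions is empty; and even the weaker, ordering-aware requirement (largeness only in the forward direction) cannot be reached by stripping ``bad-pair-heavy'' vertices. In the rotational (circulant) tournament every vertex forms bad pairs with its cyclic neighbours — both directions of the common middle have size $O(1)$ — yet no vertex lies in many bad pairs, so a Markov/second-moment stripping removes either nothing or everything. Bad pairs are pairs with nearly identical out-neighbourhoods; they are unavoidable in the tournament and must be \emph{avoided when selecting} the branch vertices, not deleted from the tournament.

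That selection mechanism is the missing idea, and it is what the paper supplies: define an auxiliary graph on $V(T)$ joining $x$ and $y$ when $|N^+(x)\Delta N^+(y)|<2k^2$; a ball-growing (BFS) lemma shows this graph can be cut by at most $n/(5\log n)$ vertices into components of size at most $n/(5\log n)$; then for $u,v$ in \emph{different} components with $d^+(u)\ge d^+(v)$ one gets $|N^+(u)\cap N^-(v)|\ge k^2-1\ge\binom{k}{2}+k$, which is the correct direction-aware analogue of your ``no bad pairs'' goal. Pairs inside a single component — precisely the ones your cleaning cannot handle — are dealt with by ordering all vertices by out-degree, splitting into two halves each meeting a constant fraction of whole components, and recursing to embed a $1$-subdivision of $T_{k/2}$ in each half; the cross paths are then embedded greedily, which is your Hall step. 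Without such a decomposition-plus-recursion (or some other device for pairs with similar out-neighbourhoods), the argument as proposed does not go through.
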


We are able to prove this with $C = 10^7$, but no attempt is made to optimize this constant, as we believe that the same result should hold after removing the $\log$ factors (see \Cref{sec:final} for a conjecture along these lines).

\subsection{Notation and Organization}
Our notation is standard. Thus, for a vertex $v$ in a directed graph $G$, we let $N_G^+(v), N_G^-(v)$ denote the out-neighbourhood and in-neighbourhood of $v$, respectively. Moreover, we let $d_G^+(v) = |N_G^+(v)|$ denote the out-degree of $v$, and analogously $d_G^-(v)$ the in-degree of $v$. We often omit the subscript `$G$' when the underlying digraph is clear. We denote by $\delta^+(G)$ the minimum out-degree of $G$; further, if $X \subset V(G)$, we write $\delta^+(X)$ to mean the minimum out-degree of $G[X]$. For a subset $X \subset V(G)$ we let $N^+(X)$ denote the set $\bigcup_{x \in X}N^+(x)$. Lastly, if $X, Y \subset V(G)$, we write $X \rightarrow Y$ if every edge of $G$ between $X$ and $Y$ is directed from $X$ to $Y$.

The remainder of this paper is organized as follows. In \Cref{sec:main}, we prove our main theorem, \Cref{thm:main}. In fact, we shall establish a quantitative version that implies \Cref{thm:main} (see \Cref{thm:embedding}). The proof requires two preparatory lemmas, which we state and prove first. In \Cref{sec:transitive}, we establish our results \Cref{thm:trans-subdivision} and \Cref{thm:trans-1-subdivision} concerning embedding subdivisions of transitive tournaments in large enough tournaments. Finally, we conclude in \Cref{sec:final} with a further consequence of the general method of this paper, and collect a few open problems.

\section{Subdivisions of complete directed graphs}\label{sec:main}

Our aim in this section is to prove the upper bound $d(k) \leq (2 + o(1))k^2$ in \Cref{thm:main}. The proof relies on two lemmas, which we prove first. The first lemma allows us to find $k$ vertices whose in-degrees do not differ by much; such vertices will serve as the branch vertex set of our potential subdivision. Our second lemma yields a dichotomy: either we can find a partial subdivision which contains many paths of length $2$ or $3$, or we can disconnect the tournament in a particularly nice way. We first isolate the following simple fact, as it will be used elsewhere.

\begin{fact}\label{fact:in-degrees}
Let $T$ be a tournament. Then for every positive integer $\ell$ there are at most $2\ell + 1$ vertices in $T$ of in-degree (out-degree) at most $\ell$.
\end{fact}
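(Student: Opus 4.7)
The plan is to bound the number of low-in-degree vertices by passing to the sub-tournament they induce. I would let $S$ denote the set of vertices of $T$ having in-degree at most $\ell$ in $T$, and set $m := |S|$. The key observation I will exploit is that for any $v \in S$, its in-degree in the induced sub-tournament $T[S]$ is at most its in-degree in $T$, and hence at most $\ell$.

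Next I would double-count the edges of $T[S]$. Since $T[S]$ is itself a tournament on $m$ vertices, it contains exactly $\binom{m}{2}$ edges. On the other hand, summing the in-degrees of vertices of $T[S]$ inside $T[S]$ also counts every edge once, giving a total of at most $m\ell$ by the previous paragraph. Combining these two expressions yields $\binom{m}{2} \leq m\ell$, which rearranges to $m \leq 2\ell + 1$, as required.

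The out-degree case is identical after reversing the orientation of every edge of $T$ (which produces another tournament and swaps the roles of in- and out-degree). There is no real obstacle in this argument; the only mildly subtle point worth flagging is that an upper bound on the in-degree of $v$ in $T$ automatically gives the same upper bound on its in-degree in $T[S]$, and it is this monotonicity that lets the double-counting be carried out entirely inside $T[S]$.
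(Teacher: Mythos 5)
Your proof is correct and is essentially the paper's argument: both bound the number of low-in-degree vertices by double-counting the $\binom{m}{2}$ edges inside the set they induce against the in-degree bound $\ell$, yielding $\binom{m}{2}\le m\ell$ and hence $m\le 2\ell+1$. The only cosmetic difference is that you sum in-degrees within $T[S]$ while the paper sums in-degrees in $T$ and lower-bounds that sum by $\binom{|L|}{2}$.
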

\begin{proof}
If $L$ is the set of vertices in $T$ of in-degree at most $\ell$, then
\[
    \ell|L| \ge \sum_{v \in L} d^-(v) \ge \binom{|L|}{2},
\]
implying the bound $|L| \le 2\ell + 1$, as claimed. The proof for `in-degree' replaced by `out-degree' is identical.
\end{proof}

\begin{lemma}\label{lem:degrees}
Suppose $k \ge 3$ is an integer and let $\alpha > 0$. If $T$ is a tournament with at least $2\alpha k^2 + (20\alpha + 4)k^{7/4}$ vertices, then there exists a set $B$ of $k$ vertices and a number $m$ such that for every $v \in B$:
\begin{itemize}
    \item $d^-(v) \ge \alpha k^2 + 2k^{7/4}$.
    \item $d^-(v) \in [m - k^{7/4}, m + k^{7/4}]$.
\end{itemize}
Additionally, if $|T| = 2\alpha k^2 + (20\alpha + 4)k^{7/4}$, then $d^+(v) \le m + (20\alpha + 1)k^{7/4}$ for every $v \in B$.
\end{lemma}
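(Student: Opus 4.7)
The plan is a two-step pigeonhole on in-degrees. First, I would use \Cref{fact:in-degrees} (in its in-degree version) to discard all vertices whose in-degree is strictly below $\alpha k^{2}+2k^{7/4}$; by the fact there are at most $2(\alpha k^{2}+2k^{7/4})-1$ such vertices. Let $S$ denote the set of surviving vertices. Then
\[
    |S|\;\geq\;|T|-2\alpha k^{2}-4k^{7/4}+1\;\geq\;20\alpha k^{7/4}+1,
\]
where the second inequality uses the hypothesis on $|T|$, and every $v\in S$ already meets the first bullet point by construction.

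Second, I would partition the range $[\alpha k^{2}+2k^{7/4},\,|T|-1]$ of possible in-degrees of vertices of $S$ into consecutive blocks of length $2k^{7/4}$. A direct count bounds the number of blocks by $(|T|-\alpha k^{2})/(2k^{7/4})$, which in the extremal case $|T|=2\alpha k^{2}+(20\alpha+4)k^{7/4}$ simplifies to at most $\tfrac{1}{2}\alpha k^{1/4}+10\alpha+2$. A numerical check shows that $|S|/(\text{number of blocks})\geq k$, so pigeonholing $S$ among the blocks produces $k$ vertices whose in-degrees lie in a single block $[a,\,a+2k^{7/4}]$ with $a\geq\alpha k^{2}+2k^{7/4}$. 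Take these $k$ vertices as $B$ and set $m=a+k^{7/4}$; then both bullet points hold simultaneously, with $m\geq\alpha k^{2}+3k^{7/4}$.

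For the ``moreover'' clause, suppose $|T|=2\alpha k^{2}+(20\alpha+4)k^{7/4}$. Then $d^{+}(v)=|T|-1-d^{-}(v)$ combined with $d^{-}(v)\geq m-k^{7/4}$ gives $d^{+}(v)\leq |T|-1-m+k^{7/4}$. Substituting the value of $|T|$, the target inequality $d^{+}(v)\leq m+(20\alpha+1)k^{7/4}$ rearranges to $m\geq\alpha k^{2}+2k^{7/4}-\tfrac{1}{2}$, which is immediate from the inequality $m\geq\alpha k^{2}+3k^{7/4}$ recorded above.

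The only delicate point in the whole argument is the calibration of constants in the second step: the exponent $7/4$ is forced by balancing $\Theta(\alpha k^{1/4})$ blocks (produced because the range of in-degrees has width $\Theta(\alpha k^{2})$) against the need for $k$ vertices per block, and the coefficient $20\alpha+4$ in the hypothesis is chosen precisely to ensure $|S|$ is large enough relative to this count. Once those parameters are fixed, the rest is bookkeeping; no structural tournament fact beyond \Cref{fact:in-degrees} is required.
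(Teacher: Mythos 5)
Your proposal is correct and follows essentially the same route as the paper's proof: apply \Cref{fact:in-degrees} to discard low in-degree vertices, pigeonhole the survivors over consecutive in-degree intervals (your blocks of width $2k^{7/4}$ versus the paper's $k^{7/4}$ is an immaterial difference), and use $d^+(v)=|T|-1-d^-(v)$ for the final clause. The one caveat is that your unchecked ``numerical check,'' exactly like the paper's ``not hard to verify'' inequality, actually requires $\alpha$ to be not too small (it fails for $\alpha$ of order $k^{-3/4}$ or below), which is harmless here since the lemma is only invoked with $\alpha \ge 1$.
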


\begin{proof}
By \Cref{fact:in-degrees} there must exist at least $|T| - 2\alpha k^2 - 4k^{7/4}$ vertices in $T$ whose in-degree is at least $\alpha k^2 + 2k^{7/4}$. If we partition the interval $[\alpha k^2 + 2k^{7/4}, |T|]$ into consecutive intervals of size $k^{7/4}$, then there must exist at least
\[
    k^{7/4}\cdot\frac{|T| - 2\alpha k^2 - 4k^{7/4}}{|T| - \alpha k^2 - 2k^{7/4}} \ge k
\]
vertices in the same interval. Note that the above inequality holds since it is equivalent to 
\[
    |T|(k^{3/4} - 1) \ge 2\alpha k^{11/4} + 4k^{10/4} - \alpha k^2 - 2k^{7/4},
\]
and it is not hard to verify that this is true for $k \ge 3$, using the assumption that $|T| \ge 2\alpha k^2 + (20\alpha + 4)k^{7/4}$. Finally, if $v$ is one of the $k$ vertices found above and $|T| = 2\alpha k^2 + (20\alpha + 4)k^{7/4}$, then $d^+(v) \le |T| - \alpha k^2 - 2k^{7/4} = \alpha k^2 + k^{7/4} + (20\alpha + 1)k^{7/4}$. Therefore,
\[
m \ge \alpha k^2 + k^{7/4} \ge d^+(v) - (20\alpha + 1)k^{7/4}, 
\]
establishing the last claim of the lemma.
\end{proof}

We say that a subset $B$ of vertices is $(\alpha, m, k)$-\emph{balanced} if it satisfies the two properties guaranteed by \Cref{lem:degrees}. Additionally, $T\ora{K}_k(\ell_1, \ell_2)$ denotes a partial subdivision of $\ora{K}_k$ with precisely $\ell_1$ paths of length $2$, $\ell_2$ paths of length $3$, and no paths of length greater than $3$. If $U \subset V(T)$ disconnects $T$, then $T\setminus U$ decomposes as $S \cup T'$ where $S \cap T' = \varnothing$, $S, T' \neq \varnothing$, and $S \rightarrow T'$. In this situation, we call $S$ the \emph{source component}, and $T'$ the \emph{sink}. The following key lemma says that either we can find a suitable $T\ora{K}_k(\ell_1, \ell_2)$, or there exists a subset $U$ of vertices which disconnects $T$, and such that the source component of the remaining tournament is quite large.

\begin{lemma}\label{lem:dichotomy}
Suppose $k \ge 3$ is an integer, $T$ is a tournament with $\delta^+(T) \ge k^2 + 2k^{7/4}$, and suppose $B \subset V(T)$ is an $(\alpha, m, k)$-balanced subset of $k$ vertices for some $\alpha, m > 0$. Then one of the following must occur:
\begin{enumerate}
    \item \label{itm:dichotomy1} There is a copy of $T\ora{K}_k(\ell_1, \ell_2)$ in $T$ with branch vertex set $B$ such that
        \[
        4(\ell_1 + \ell_2) + 6k^{7/4} > m.
        \]
    \item There is a subset $U \subset V(T)$ that disconnects $T$ such that the source component $S$ of $T \setminus U$ satisfies $|S| \ge |U|+k$. Moreover, the sink $T\setminus (U \cup S)$ has size at least $k$.
\end{enumerate}
\end{lemma}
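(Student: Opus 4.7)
The plan is to execute a greedy path-building procedure. Initialize $W = \varnothing$, $\ell_1 = \ell_2 = 0$, and maintain a partial subdivision with branch set $B$. While some ordered pair $(u,v) \in B \times B$ with $u \ne v$ remains unjoined by a path, I pick such a pair and attempt to add a directed path from $u$ to $v$ of length $1$ (the direct edge, if present), $2$, or $3$, with internal vertices (if any) lying in $V(T) \setminus (W \cup B)$; I then update $W$ and the counts $\ell_1, \ell_2$ accordingly. Whenever $4(\ell_1+\ell_2) + 6k^{7/4} > m$ is achieved, I stop and conclude case~(1).

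Otherwise the procedure eventually becomes stuck on some pair $(u,v)$ with $4(\ell_1+\ell_2) + 6k^{7/4} \le m$ still holding. Writing $A := N^+(u) \setminus (W \cup B)$ and $C := N^-(v) \setminus (W \cup B)$, the non-existence of paths of lengths $1$, $2$, and $3$ forces respectively $v \to u$, $A \cap C = \varnothing$, and that every edge between $C$ and $A$ is directed from $C$ to $A$. The first two facts yield $v \to A$ (since $a \in A$ with $a \to v$ would belong to $N^-(v) \setminus (W \cup B) = C$, contradicting $A \cap C = \varnothing$), and symmetrically $C \to u$. In particular, $(\{v\} \cup C) \to (\{u\} \cup A)$ in $T$.

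To build the separator required by case~(2), I set $T' := \{u\} \cup A$, $S := \{v\} \cup C$, and $U := V(T) \setminus (S \cup T')$; the relation above makes $U$ a disconnector with source $S$ and sink $T'$. Decomposing $V(T) \setminus \{u,v\}$ into the four quadrants determined by $N^{\pm}(u) \cap N^{\pm}(v)$ and writing $Q := N^-(u) \cap N^+(v) \setminus (W \cup B)$ for the fourth quadrant outside $W \cup B$, one obtains $|U| = |Q| + |W| + k - 2$ and $n = |A| + |C| + |Q| + |W| + k$. The bound $|T'| \ge k$, i.e.\ $|A| \ge k-1$, follows from $d^+(u) \ge \delta^+(T) \ge k^2 + 2k^{7/4}$ together with $|W| \le (m - 6k^{7/4})/2$ (the latter from the failure of case~(1)). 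For $|S| \ge |U| + k$, the crucial step is to upper-bound $|Q|$: since $C \to u$ and $Q \subseteq N^-(u)$, the sets $C$ and $Q$ are disjoint subsets of $N^-(u) \setminus \{v\}$, whence the balanced estimate $d^-(u) \le m + k^{7/4}$ yields $|C| + |Q| \le m + k^{7/4} - 1$. Combined with the lower bound $|C| \ge m - k^{7/4} - |W| - k + 1$ coming from $d^-(v) \ge m - k^{7/4}$, this gives $|Q| \le 2k^{7/4} + |W| + k - 2$, from which the required size inequality will follow by direct computation.

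The main obstacle will be managing the delicate interplay between $|W|$, $m$, and $|Q|$ to close the size inequality $|S| \ge |U| + k$. The bound $|W| \le (m - 6k^{7/4})/2$ coming from the failure of case~(1) barely suffices, so the argument must simultaneously exploit the balanced concentration of $d^-$ around $m$ (on both $u$ and $v$) and the minimum out-degree hypothesis $\delta^+(T) \ge k^2 + 2k^{7/4}$. In the borderline regime where $m$ is near the top of its allowed range, a more refined separator—for example one constructed from the upstream closure of $v$ in the induced subgraph $T - (W \cup B \setminus \{u,v\})$, where the structural constraints $A \cap C = \varnothing$ and $C \to A$ can be shown to prevent $u$ from reaching $v$—may be needed in order to obtain the partition of the required form.
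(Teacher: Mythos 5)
Your construction of the separator from the stuck pair is the same in spirit as the paper's (the structural facts $A\cap C=\varnothing$, $C\to A$, hence $(\{v\}\cup C)\to(\{u\}\cup A)$, with source $\{v\}\cup C$, sink $\{u\}\cup A$ and $U$ everything else), but the counting you sketch does not close, and this is a genuine gap, not a deferred computation. With your notation, $|U|=|Q|+|W|+k-2$ and your bound on the fourth quadrant is $|Q|\le 2k^{7/4}+|W|+k-2$, while the only lower bound available for the source is $|S|=1+|C|\ge m-k^{7/4}-|W|-k+2$. Hence $|S|\ge |U|+k$ requires $m\ge 3|W|+3k^{7/4}+4k-6=3\ell_1+6\ell_2+3k^{7/4}+O(k)$, whereas the failure of case (1) only guarantees $m\ge 4(\ell_1+\ell_2)+6k^{7/4}$. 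These are incompatible as soon as $2\ell_2>\ell_1+3k^{7/4}$, and nothing in an arbitrary greedy run prevents this: the embedding may produce mostly length-$3$ paths, with $\ell_2$ of order $k^2$ (e.g.\ $m\approx 2k^2$, $\ell_1=0$, $\ell_2\approx m/4$ makes your requirement read $m\gtrsim 3k^2$). Equivalently, substituting your own bound $|W|\le (m-6k^{7/4})/2$ turns the needed inequality into $m\le 12k^{7/4}+O(k)$, which is false in general since $m$ has no upper bound in the hypotheses and in the intended application $m\ge \alpha k^2+k^{7/4}$. So the claim that this bound ``barely suffices'' is not correct --- it does not suffice, and the promised ``direct computation'' cannot be carried out. (Your sink bound $|A|\ge k-1$ is fine, but only via $|W|\le 2\binom{k}{2}$, i.e.\ provided direct edges are used whenever present; the bound through $(m-6k^{7/4})/2$ again needs an upper bound on $m$ that you do not have.)

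The missing ingredient is exactly the paper's extremal choice of embedding order: among all greedy runs that get stuck before the threshold, pick one maximizing the number $\ell_1$ of length-$2$ paths. A swap argument then shows that the obstruction set $N^+(u)\cap N^-(v)$ meets no interior vertex of a length-$3$ path: if $z\in N^+(u)\cap N^-(v)$ lay on a length-$3$ path for some earlier pair, one could reorder and embed $(u,v)$ as the $2$-path $uzv$, strictly increasing $\ell_1$. Hence $|N^+(u)\cap N^-(v)|\le \ell_1+k$, and balancedness bounds the fourth quadrant by $\ell_1+k+k^{7/4}$ instead of your $|W|+2k^{7/4}+k$; this yields $|U|\le 2(\ell_1+\ell_2)+2k+k^{7/4}$ and then $|S|\ge m-k^{7/4}-|U|\ge |U|+3k^{7/4}-4k\ge |U|+k$. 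Your closing suggestion of a ``more refined separator'' via the upstream closure of $v$ is not developed and, without some control of this type on $N^+(u)\cap N^-(v)$, it faces the same obstacle, since the internal vertices of the length-$3$ paths must still be absorbed into the cut.
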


\begin{proof}
Let $B$ be an $(\alpha, m, k)$-balanced $k$-set of vertices in $T$, and suppose ($1$) fails in the statement of the lemma. List the edges $e_1, \ldots, e_N$ of $T^*[B]$ where $N := \binom{k}{2}$, and $T^*$ is the tournament obtained from $T$ by reversing $T$'s edges. Then for any permutation $\sigma: [N] \to [N]$ there is an index $f = f(\sigma)$ such that the edges $e_{\sigma(1)}, \ldots, e_{\sigma(f-1)}$ can be successfully embedded as paths of length $2$ or $3$, but $e_{\sigma(f)}$ cannot, and the resulting copy of $T\ora{K}_k(\ell_1, \ell_2)$ satisfies
\[
m \ge 4(\ell_1 + \ell_2) + 6k^{7/4}.
\]
Pick an ordering $\sigma$ such that the number of paths $\ell_1$ of length $2$ in the partial subdivision $\cS$ with embedded edges $e_{\sigma(1)}, \ldots, e_{\sigma(f-1)}$ is maximized. Without loss of generality, we may assume $\sigma$ is the identity permutation, and let $e_f = xy$. Since $e_f$ fails to embed we must have that every edge is directed from $N^-(y) \setminus V(\cS)$ to $N^+(x) \setminus V(\cS)$. Similarly, we have that $W = N^+(x) \cap N^-(y) \subset V(\cS)$. Let $A$ denote the set of $\ell_1$ non-branch vertices that are on paths of length $2$ in $\cS$. We claim that, in fact, $W \subset A \cup B$. Indeed, if there is $e_i = uv$, $i < f$, and a $u-v$ path $uzwv$ with, say, $z \in W$, then consider the embedding order where we swap $e_i$ and $e_f$ and embed $e_f$ using the $2$-path $xzy$. This is a legal embedding of $e_f$, as $z$ does not belong to any of the subdivided edges $e_{j}$ with $j \neq i, f$. But now we have an embedding order with more directed paths of length $2$ in the partial subdivision, contradicting our choice of $\sigma$.

 Now since $B$ is $(\alpha, m, k)$-balanced, all in-degrees differ by at most $k^{7/4}$, and so
\begin{align*}
   |N^-(x) \setminus N^-(y)| &\le |N^-(y) \setminus N^-(x)| + k^{7/4} \\
   & = |W| + k^{7/4}.\\
   &\le \ell_1+ k + k^{7/4}.
\end{align*}
Note that the last inequality holds since $W \subset A \cup B$.
Now, let $U = V(\cS) \cup (N^-(x) \setminus N^-(y))$ and observe that $|U|$ satisfies the upper bound
\begin{align*}
|U| &\le 2\ell_2 + \ell_1 +  k + (\ell_1 + k + k^{7/4})\\
&= 2(\ell_1 + \ell_2) + 2k + k^{7/4}\\
\end{align*}
Then $T \setminus U$ is disconnected with source component $S = N^-(y)\setminus U$ and sink $N^{+}(x)\setminus V(\cS)$. By the minimum out-degree condition on $T$, the sink has at least $k^2 + 2k^{7/4} - (2\binom{k}{2} + k) > k$ vertices. Finally, as $|S| \ge |N^-(y)| - |U|$, and recalling that $|N^-(y)| \ge m - k^{7/4}$ and $m \ge 4(\ell_1 + \ell_2) + 6k^{7/4}$, we have
\begin{align*}
    |S| &\ge (m - k^{7/4}) - (2(\ell_1 + \ell_2) + 2k + k^{7/4} )\\
    &\ge 2(\ell_1 + \ell_2) + 4k^{7/4} - 2k\\
    &\ge |U| + (3k^{7/4} - 4k)\\
    &\ge |U| + k,
\end{align*}
where the last inequality follows since $k \ge 3$. This completes the proof
of the lemma.
\end{proof}

With the above lemma complete, we are ready to prove the main result of this section.

\begin{theorem}\label{thm:embedding}
Let $k \ge 2$ be an integer. Any tournament $T$ with $\delta^{+}(T)\geq 2k^2+147k^{7/4}$ contains a $T\ora{K}_k$. Moreover, this subdivision has the property that each edge is subdivided at most twice.
\end{theorem}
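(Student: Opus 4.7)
The plan is to proceed by contradiction. Suppose $T$ is a tournament with $\delta^{+}(T)\ge 2k^2+147k^{7/4}$ containing no $T\ora{K}_k$ with each edge subdivided at most twice, and choose $T$ to be a counterexample with the fewest vertices.

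The crucial first observation is the standard tournament inequality: since $\sum_v d^+(v)=\binom{|T|}{2}$, we have $\delta^+(T)\le(|T|-1)/2$, so $|T|\ge 2\delta^+(T)+1\ge 4k^2+294k^{7/4}+1$. This doubled size bound is what makes it possible to invoke \Cref{lem:degrees} with a parameter $\alpha$ near $2$ rather than near $1$. Concretely, I would take $\alpha$ to be (essentially) the largest real number for which the hypothesis $|T|\ge 2\alpha k^2+(20\alpha+4)k^{7/4}$ holds; a short calculation using the bound on $|T|$ shows that $\alpha\ge 2+\Theta(k^{-1/4})$ is admissible. Applying \Cref{lem:degrees} with this $\alpha$ yields an $(\alpha,m,k)$-balanced set $B$ with $m\ge \alpha k^2+k^{7/4}\ge 2k^2+6k^{7/4}$, comfortably above the threshold needed for \Cref{lem:dichotomy}.

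Next I would apply \Cref{lem:dichotomy} to $T$ and $B$ and perform a case analysis on its dichotomy. In case~(1), a copy of $T\ora{K}_k(\ell_1,\ell_2)$ with $4(\ell_1+\ell_2)+6k^{7/4}>m$ is produced; combined with the lower bound on $m$, this gives $\ell_1+\ell_2>(m-6k^{7/4})/4\ge \binom{k}{2}-1$, forcing $\ell_1+\ell_2=\binom{k}{2}$, i.e.\ a full $T\ora{K}_k$ with paths of length at most $3$ — contradicting our assumption. Therefore case~(2) must hold, giving a disconnecting set $U$ with source $S$ and sink $T''$ satisfying $|S|\ge|U|+k$ and $|T''|\ge k$.

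The main obstacle, which I expect to be the technical heart of the proof, is extracting the contradiction from case~(2). Here I would exploit two quantitative bounds that the proof of \Cref{lem:dichotomy} actually produces: the upper bound $|U|\le 2(\ell_1+\ell_2)+2k+k^{7/4}\le k^2+k+k^{7/4}$, and the sharper sink bound $|T''|\ge d^+(x)-|V(\mathcal{S})|\ge\delta^+(T)-k^2\ge k^2+147k^{7/4}$ for any $x\in B$. Passing to $T''$, for every $v\in T''$ we have $d^+_{T''}(v)\ge d^+_T(v)-|U|$, so $\delta^+(T'')\ge\delta^+(T)-|U|$ is still substantial. The plan is then to iterate the whole argument on $T''$ (applying \Cref{lem:degrees} with a somewhat smaller $\alpha'$ that its size still supports, and then \Cref{lem:dichotomy}), tracking how the parameters evolve. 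Either some iteration terminates with case~(1) yielding a full $T\ora{K}_k$ inside $T''\subseteq T$ — contradicting our counterexample — or the minimality of $T$ is contradicted directly, since at each step we pass to a strictly smaller subtournament in which the same hypotheses are still satisfied. The constant $147$ arises from ensuring that the bookkeeping in this iteration closes for every $k\ge 2$, absorbing the lower-order loss from $|U|$ at each step and from the boundary condition in \Cref{lem:degrees}.
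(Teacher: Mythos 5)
Your handling of outcome (1) of \Cref{lem:dichotomy} does not do what you claim. A $T\ora{K}_k(\ell_1,\ell_2)$ produced by that lemma subdivides only the $\binom{k}{2}$ pairs coming from $T^*[B]$ (the other direction of each pair is a direct edge of $T[B]$), so always $\ell_1+\ell_2\le\binom{k}{2}=(k^2-k)/2$. With your parameters $m\ge 2k^2+6k^{7/4}$, the inequality $4(\ell_1+\ell_2)+6k^{7/4}>m$ would force $\ell_1+\ell_2>k^2/2>\binom{k}{2}$, which is impossible; hence outcome (1) never fires at all, and in particular it never hands you a complete subdivision. The dichotomy simply always returns outcome (2), so the entire weight of your argument rests on the iteration — and in later iterations, where the relevant $m$ may be small enough for (1) to occur, it only yields a genuinely partial subdivision, which your proposal has no mechanism to complete.

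The iteration itself does not close, and this is the real gap. Passing to the sink $T''$ only gives $\delta^+(T'')\ge\delta^+(T)-|U|$, and your own bound $|U|\le 2(\ell_1+\ell_2)+2k+k^{7/4}$ allows $|U|\approx k^2$; after one step the minimum out-degree can already be as low as about $k^2+146k^{7/4}<2k^2+147k^{7/4}$, so the ``same hypotheses'' are not satisfied in the smaller subtournament and the minimal-counterexample contradiction is unavailable, while after two steps the quadratic term can vanish entirely. A slack of $147k^{7/4}$ cannot absorb losses of order $k^2$ per step, and there is no bound on the number of steps (the source/sink decomposition can recur many times). This is exactly where the paper's proof diverges from yours: it demands only $\delta^+\ge k^2+12k^{7/4}$ inside later sinks (stripping up to $k$ low-out-degree vertices into a set $R$), chooses each cut $U_i$ of minimum size so that every subset of $U_i$ expands into $S_i$ (\Cref{claim:expansion}), converts this expansion via Hall's theorem (\Cref{prop:Hall}) into matchings from $U=\bigcup U_i$ into $S=\bigcup S_i$, and then finishes by routing the missing paths as length-$3$ paths $x\to u\to s\to y$ with $u\in U$, $s\in S$, using $S\rightarrow T_{t+1}$ and the fact that the branch vertices have few out-neighbours inside the final sink and hence many in $U$ (\Cref{lem:outnbrs-embedding}); the balancedness bound $d^+_{T_{t+1}}(v)\le m+141k^{7/4}$ is what makes this count work when a partial subdivision is present. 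Without some substitute for this completion mechanism, the only case that can actually occur in your setup is left unresolved, so the proposal does not prove the theorem.
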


The rough idea of the proof is as follows. We shall iteratively apply \Cref{lem:dichotomy}: if at some point we find a partial subdivision with many paths of length $2$ or $3$, then we stop. Otherwise, we obtain a cut set $U$ with source component $S$ satisfying $|S| \ge |U|$, and look to apply the lemma again to $T \setminus (U \cup S)$. Eventually we either obtain a partial subdivision $\cS$, or reach a subtournament $T'$ that is quite small. In the first case, we show how to extend this partial subdivision to a full subdivision. In the latter case, since $T'$ is small and the minimum out-degree is large, every vertex in $T'$ has many out-neighbours outside of $T'$. We use this fact, together with some structural features of the cut sets and source components, to embed the requisite paths. Let us now make these ideas more precise.

\begin{proof}[Proof of \Cref{thm:embedding}]
We may assume that $k \ge 3$ as any tournament with $\delta^+(T) \ge 1$ contains a directed cycle (i.e., a subdivision of $\ora{K}_2$).
We shall apply \Cref{lem:dichotomy} repeatedly to obtain subtournaments $T_1 := T, T_2, \ldots$, and subsets $U_0 := \varnothing, U_1, U_2, \ldots$, such that for every $i \ge 1$
\begin{itemize}
    \item $U_i \subset T_i$.
    \item $|T_i| \ge k$.
    \item $T_i \subset T_j$ if $i > j$.
    \item $T_i \setminus U_i$ is disconnected with source component $S_i$ satisfying $|S_i| \ge |U_i|$ and such that $|T_i \setminus (U_i \cup S_i)| \ge k$.
\end{itemize}
Indeed, set $T_1 = T$, $U_0 = \varnothing$ and suppose $T_i, U_{i-1}$ have already been defined for some $i \ge 1$. We shall show how to obtain $T_{i+1}$ and $U_{i}$ as follows. We claim that either $T_i$ contains a subtournament $T'_{i}$ which has large minimum out-degree, or we can find $k$ vertices of small minimum out-degree in $T_i$. To make this precise, initialize $R = \varnothing$. If there is a vertex $v$ in $T'_{i} = T_i$ with $d^+_{T'_{i}}(v) < k^2 + 12k^{7/4}$, then add it to the set $R$. Looking at $T_i' = T_i\setminus\{x\}$, we repeat the same process to $T_{i}\setminus\{x\}$ and so on. Either we obtain $|R| = k$ or $T'_i=(T_i\setminus R) \neq \emptyset$ with $|R| < k$ (as $|T_i| \ge k$) and $\delta^+(T'_{i}) \ge k^2 + 12k^{7/4}$. As $\delta^+(T'_i) \ge k^2 + 12k^{7/4}$ we easily have $|T'_i| \ge 2k^2 + 24k^{7/4}$. Choose a real number $\alpha \ge 1$ such that
\[
|T_i| = 2\alpha k^2 + (20\alpha + 4)k^{7/4},
\]
and apply \Cref{lem:degrees} to $T_i$. We obtain an $(\alpha, m, k)$-balanced subset $B_i \subset V(T'_i)$ of $k$ vertices, for some $m$. Now apply \Cref{lem:dichotomy} to $T'_i$ and $B_i$. If condition ($1$) holds from the lemma, then we terminate the procedure at step $i$ and obtain a partial subdivision $\cS = T\ora{K}_k(\ell_1, \ell_2)$ in $T_i$ on branch vertex set $B_i$ satisfying
\[
4(\ell_1 + \ell_2) + 6k^{7/4} > m.
\]
Otherwise, ($2$) holds. Let $U'_i$ be the cut set and $S'_i$ the source component, and moreover, let $C_i=U'_i\cup R$. Note that by ($2$) of \Cref{lem:dichotomy} we have $|S'_i|\ge |U'_i| + k \ge |C_i|$, and the sink, namely $T_i\setminus (C_i\cup S'_i)$, has size at least $k$. 

It follows that we may choose a set $U_{i}\subset T_i$ of minimum possible size such that $T_i \setminus U_i$ is disconnected with source component $S_i$ satisfying $|S_i| \ge |U_i|$, and such that the sink $T_{i+1}$ has size at least $k$. 
We can continue applying the same argument to $T_{i+1}$. Note that eventually this process must terminate. Indeed, for each $i$ we have that $|T_{i+1}| < |T_i|$ (as $|U_i \cup S_i| \ge 1$). So we must reach a stage $t$ where for which $T_{t+1}$ either contains a partial subdivision $\cS$ as per ($1$) of \Cref{lem:dichotomy}, or we find $k$ vertices all of which have less than $k^2 + 12k^{7/4}$ out-neighbours in $T_{t+1}$. Thus we have established the following:

\begin{claim}\label{claim:result}
The above procedure terminates at some stage $t \ge 1$ with either
\begin{enumerate}
    \item \label{itm:result-1} a partial subdivision $\cS = T\ora{K}_k(\ell_1, \ell_2) \subset T_{t+1}$ satisfying $4(\ell_1 + \ell_2) + 6k^{7/4} > m$, or
    \item \label{itm:result-2} a subset $B \subset T_{t+1}$ of $k$ vertices with $d^+_{T_{t+1}}(v) < k^2 + 12k^{7/4}$ for every $v \in B$. \hfill\qedsymbol
\end{enumerate}
\end{claim}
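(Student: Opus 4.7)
The claim is essentially a formalization of the iterative procedure just constructed in the paragraph immediately preceding its statement, so the plan is short: make the iteration rigorous, verify termination, and check that the two stopping branches exhaust all possibilities.

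For the formalization I would set $T_1 := T$ and, inductively given $T_i$ with $|T_i| \ge k$, carry out the two substeps already described in the discussion before the claim. First, build $R \subset T_i$ by stripping off vertices whose out-degree in the current tournament is less than $k^2 + 12k^{7/4}$; if this process reaches $|R| = k$ then I halt with $t = i-1$ and $B := R \subset T_{t+1}$, which is exactly outcome (\ref{itm:result-2}). Otherwise $T'_i := T_i \setminus R$ is nonempty with $|R| < k$ and $\delta^+(T'_i) \ge k^2 + 12k^{7/4}$. The latter bound forces $|T'_i| \ge 2k^2 + 24k^{7/4}$ and hence $|T_i| \ge 2k^2 + 24k^{7/4}$, so I may choose a real $\alpha \ge 1$ with $|T_i| = 2\alpha k^2 + (20\alpha + 4)k^{7/4}$ and legally invoke \Cref{lem:degrees} on $T_i$ to obtain an $(\alpha, m, k)$-balanced set $B_i \subset V(T'_i)$. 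Now \Cref{lem:dichotomy} applies to the pair $(T'_i, B_i)$, since its hypothesis $\delta^+(T'_i) \ge k^2 + 2k^{7/4}$ is automatic. If it returns outcome (1) I halt (with $t = i-1$) and land in case (\ref{itm:result-1}); if it returns outcome (2), absorbing the peeled vertices by setting $C_i := U'_i \cup R$ preserves both $|S'_i| \ge |C_i|$ and the sink-size bound $\ge k$, so I may pick the minimum-size cut $U_i$ and define $T_{i+1}$ as the resulting sink (with $|T_{i+1}| \ge k$), and iterate.

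Termination is then automatic: at every continuing step $|U_i \cup S_i| \ge 1$, so $|T_{i+1}| < |T_i|$, and the strictly decreasing sequence of sizes must halt after finitely many iterations. Since every halting moment is, by construction, triggered by exactly one of the two branches above, the claim follows. I do not foresee any genuine obstacle; the only care required is bookkeeping to verify the hypotheses of \Cref{lem:degrees} and \Cref{lem:dichotomy} at each iteration (namely, $|T_i|$ large enough to allow $\alpha \ge 1$, and $\delta^+(T'_i) \ge k^2 + 2k^{7/4}$). Both of these are the exact reason the preprocessing step producing $R$ and $T'_i$ is designed the way it is, so the check is immediate.
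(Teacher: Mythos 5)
Your proposal follows the paper's own argument essentially verbatim: peel low out-degree vertices into $R$ (halting with outcome (\ref{itm:result-2}) if $|R|=k$), otherwise apply \Cref{lem:degrees} and \Cref{lem:dichotomy} to halt with outcome (\ref{itm:result-1}) or pass to a minimal cut and the sink $T_{i+1}$, with termination guaranteed by the strict decrease $|T_{i+1}| < |T_i|$. The hypothesis checks you flag ($\alpha \ge 1$ via $|T'_i| \ge 2k^2 + 24k^{7/4}$, and $\delta^+(T'_i) \ge k^2 + 2k^{7/4}$) are exactly the ones the paper makes, so the proof is correct and takes the same route.
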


We shall denote by $B$ either the branch vertex set of $\cS$ in the first case of \Cref{claim:result}, or the $k$-set obtained in the second case. This set $B$ will play the role of the branch vertex set of the full subdivision we wish to embed. Let $U = \bigcup_{i=1}^{t}U_i$ and $S = \bigcup_{i=1}^tS_i$. The following claim asserts that for each $i \in [t]$ all subsets of $U_i$ send many out-edges to $S_i$.

\begin{claim}\label{claim:expansion}
For each $i \in [t]$ the following holds: for every non-empty subset $X \subseteq U_i$
\[
    |N^+(X) \cap S_i| \ge |X|/2.
\]
\end{claim}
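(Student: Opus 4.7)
My plan is to exploit the minimality in the choice of $U_i$: recall that $U_i$ was selected to be a subset of $T_i$ of \emph{minimum possible size} such that $T_i \setminus U_i$ decomposes as $S_i \to T_{i+1}$ with $|S_i| \ge |U_i|$ and $|T_{i+1}| \ge k$. So I will argue by contradiction. Suppose there is a non-empty $X \subseteq U_i$ with $|Y| < |X|/2$, where $Y := N^+(X) \cap S_i$. I will construct from $X$ and $Y$ a strictly smaller set with the same disconnecting properties, violating minimality.

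The key step is a swap: move the vertices of $X$ out of the cut and into the sink, and move the vertices of $Y$ out of the source and into the cut. Formally, set
\[
    U_i' \;:=\; (U_i \setminus X) \cup Y, \qquad S_i' \;:=\; S_i \setminus Y.
\]
The complement of $U_i' \cup S_i'$ in $T_i$ is then $(T_i \setminus (U_i \cup S_i)) \cup X$, which already contains the original sink $T_{i+1}$, so it has size at least $k$. From $|Y| < |X|/2$ we immediately get $|U_i'| = |U_i| - |X| + |Y| < |U_i|$, and combined with $|S_i| \ge |U_i|$ one verifies
\[
    |S_i'| - |U_i'| \;=\; (|S_i| - |U_i|) + (|X| - 2|Y|) \;>\; 0,
\]
which also forces $|S_i'| > 0$.

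It remains to check that $S_i' \to T_i \setminus (U_i' \cup S_i')$ in $T_i$. Edges from $S_i'$ into the old sink $T_i \setminus (U_i \cup S_i)$ are oriented correctly since $S_i' \subseteq S_i$ and $S_i \to T_i \setminus (U_i \cup S_i)$ in the original decomposition. The only new edges to examine are those between $S_i'$ and $X$: by definition of $Y$, no vertex of $X$ sends an edge into $S_i \setminus Y = S_i'$, and so the tournament property forces every edge between $S_i'$ and $X$ to be oriented from $S_i'$ to $X$. Hence $U_i'$ is a legitimate cut with $|S_i'| \ge |U_i'|$ and sink of size at least $k$, yet $|U_i'| < |U_i|$, contradicting the minimality of $|U_i|$.

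I do not expect a substantive obstacle here; the proof is essentially an exchange argument in the spirit of a min-cut/Hall-style swap, and the tournament property is exactly what lets the orientations between $S_i'$ and $X$ fall into place for free. The only point that requires any care is confirming that $S_i'$ remains non-empty after the swap, but this drops out of the inequality $|S_i'| > |U_i'| \ge 0$ established above.
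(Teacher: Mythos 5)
Your proof is correct and is essentially the paper's own argument: the same exchange $U_i' = (U_i \setminus X) \cup (N^+(X)\cap S_i)$, $S_i' = S_i \setminus N^+(X)$ against the minimality of $|U_i|$, with the same counting $|S_i'| \ge |U_i'|$. The only (harmless) difference is that the paper treats the case $S_i \subseteq N^+(X)$ separately, while you dispose of it implicitly via $|S_i'| > |U_i'| \ge 0$, and you spell out the orientation checks the paper leaves implicit.
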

\begin{proof}
Let $X$ be a non-empty subset of $U_i$. If $S_i \subset N^+(X)$, then 
\[
|N^+(X)\cap S_i| = |S_i| \ge |U_i| \ge |X|,
\]
where the first inequality holds by definition of the sets $U_i$ and $S_i$. So we may assume that $S_i$ is not contained in $N^+(X)$. 
If the claim is false, then replace $U_i$ by $U'_i = (U_i \setminus X) \cup (N^+(X)\cap S_i)$ and set $S'_i = S_i \setminus N^+(X)$. The set $U'_i$ has size strictly smaller than the size of $U_i$ and still disconnects $T_i$. Moreover,
\begin{align*}
    |S_i'| &= |S_i| - |N^+(X) \cap S_i|\\
    &\ge |U_i| - |X|/2\\
    &\ge |U_i| - |X| + |N^+(X)\cap S_i| = |U'_i|,
\end{align*}
which contradicts the minimal choice of $U_i$. This completes the proof of the claim.
\end{proof}

The next lemma asserts that, as long as vertices in $B$ send enough out-neighbours outside of $T_{t+1}$, then we may embed the required internally vertex disjoint directed paths joining prescribed pairs in $B$.

\begin{lemma}\label{lem:outnbrs-embedding}
Let $\ell \ge 1$ be an integer and let $(x_1, y_1), \ldots , (x_\ell, y_\ell)$ be distinct pairs of vertices in $B$ with $x_i \neq y_i$ for each $i \in [\ell]$. If every vertex in $B$ has at least $2\ell$ out-neighbours in $T\setminus T_{t+1}$, then there exist pairwise internally disjoint directed paths $P_i$ of length 3 joining $x_i$ to $y_i$ for every $i \in [\ell]$.
\end{lemma}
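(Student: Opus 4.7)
The plan is to formulate the lemma as a Menger-type problem on a suitable auxiliary digraph. Let $L = \{x_1,\ldots,x_\ell\}$, $R = \{y_1,\ldots,y_\ell\}$, and write $\tilde U = \bigcup_{i=1}^t U_i$ and $\tilde S = \bigcup_{i=1}^t S_i$, so that $T\setminus T_{t+1} = \tilde U \cup \tilde S$. A key preliminary observation is that every $s \in S_i$ dominates $T_{i+1}$, hence the whole of $B$; therefore each $v \in B$ has all of its at-least-$2\ell$ out-neighbours in $T\setminus T_{t+1}$ lying in $\tilde U$, while every $b \in \tilde S$ sends an arc to every $y_j \in R$ in $T$.

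With this in mind, I would build an auxiliary digraph $H$ on vertex set $L \cup \tilde U \cup \tilde S \cup R$ with arcs $x_i \to a$ whenever $a \in N^+(x_i) \cap \tilde U$, arcs $a \to b$ whenever $a \in \tilde U$, $b \in \tilde S$ and $a \to b$ in $T$, and arcs $b \to y_j$ for every $b \in \tilde S$ and $y_j \in R$. Every $L$-to-$R$ path in $H$ is a length-$3$ path in $T$ with internal vertices in $\tilde U$ and $\tilde S$. It is therefore enough to show that $H$ admits $\ell$ internally vertex-disjoint $L$-to-$R$ paths, and then re-route endpoints to match $x_i$ with $y_i$.

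The crucial step is to verify Menger's condition: every vertex subset $C \subseteq \tilde U \cup \tilde S$ separating $L$ from $R$ in $H$ satisfies $|C| \ge \ell$. Writing $C_U = C \cap \tilde U$ and $C_S = C \cap \tilde S$, fix any $x \in L$ and set $A := N^+(x) \cap \tilde U \setminus C_U$, so $|A| \ge 2\ell - |C_U|$. Since $C$ separates $x$ from $R$ and every $b \in \tilde S \setminus C_S$ reaches every $y_j$, no $a \in A$ can have an out-neighbour in $\tilde S \setminus C_S$; hence $N^+(A) \cap \tilde S \subseteq C_S$. Partitioning $A$ by stage as $A_j := A \cap U_j$ and applying \Cref{claim:expansion} to each $A_j$, together with the pairwise disjointness of the $S_j$'s, gives $|C_S| \ge |N^+(A) \cap \tilde S| \ge \sum_{j=1}^t |A_j|/2 = |A|/2$. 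Combining $|A| \ge 2\ell - |C_U|$ with $|A| \le 2|C_S|$ yields $|C_U| + 2|C_S| \ge 2\ell$, so $|C| \ge 2\ell - |C_S| \ge 2\ell - |C|$, i.e.\ $|C| \ge \ell$.

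Finally, Menger's theorem provides $\ell$ internally vertex-disjoint paths $Q_r = x_{\sigma(r)} \to a_r \to b_r \to y_{\tau(r)}$ in $H$ for some permutations $\sigma, \tau$ of $[\ell]$. Because each $b_r \in \tilde S$ dominates $B$, the arc $b_r \to y_{\sigma(r)}$ also lies in $T$; replacing the last edge of each $Q_r$ yields a length-$3$ path in $T$ from $x_{\sigma(r)}$ to $y_{\sigma(r)}$ with the same internal vertices $a_r, b_r$, and re-indexing by $i = \sigma(r)$ produces the required $\ell$ pairwise internally vertex-disjoint paths. I expect the cut inequality to be the only delicate point: the slack factor $2$ in the out-degree hypothesis is exactly what is needed to absorb the factor-$1/2$ expansion loss in \Cref{claim:expansion}, which explains why the constant $2\ell$ appears in the statement.
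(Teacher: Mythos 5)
Your route is genuinely different from the paper's: the paper splits each $U_i$ into two halves, each matched into $S_i$ via the defect form of Hall's theorem (\Cref{prop:Hall}), and then runs a two-phase greedy assignment of the pairs, whereas you set up a layered auxiliary digraph and appeal to a Menger/max-flow argument, verifying the cut condition directly from \Cref{claim:expansion}. Your cut computation itself is correct: all out-neighbours of a vertex of $B$ in $T\setminus T_{t+1}$ do lie in $\bigcup_i U_i$ (since each $S_i$ dominates $T_{i+1}\supseteq T_{t+1}\supseteq B$), the sets $N^+(A\cap U_j)\cap S_j$ are pairwise disjoint, and $|C_U|+2|C_S|\ge 2\ell$ does force $|C|\ge \ell$; as you say, the factor $2$ in the hypothesis exactly absorbs the loss of $1/2$ in \Cref{claim:expansion}.

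There is, however, a gap in the reduction step. The pairs $(x_i,y_i)$ are distinct as pairs, but the vertices $x_i$ may repeat (the paper notes this explicitly, and they do repeat in the application inside \Cref{thm:embedding}), so $|L|$ can be smaller than $\ell$, and ``$\ell$ internally vertex-disjoint $L$-to-$R$ paths, for some permutations $\sigma,\tau$ of $[\ell]$'' is not what Menger between the vertex sets $L$ and $R$ delivers: it does not control how many of the paths start at each vertex of $L$. Your re-routing trick repairs only the last layer, because every vertex of $\bigcup_i S_i$ dominates all of $B$ and so the final arc can be redirected to any $y_i$; the arcs $x\to a$ genuinely depend on $x$, so a family of internally disjoint $L$--$R$ paths that starts, say, twice at $x'$ and once at $x$ cannot be converted into the required system when the demand is the reverse. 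The fix is standard: introduce one source vertex per pair (equivalently, a super-source with an arc of capacity $m_x$ to each $x\in L$, where $m_x$ is the number of pairs with first coordinate $x$), add a super-sink behind $R$ (legitimate by the domination just mentioned), give the vertices of $\bigcup_i U_i\cup\bigcup_i S_i$ capacity one, and apply vertex-capacitated max-flow--min-cut. A cut then consists of a set $L_0$ of source arcs together with a set $C\subseteq \bigcup_i U_i\cup\bigcup_i S_i$; either $L_0$ covers all pairs, so its capacity is already $\ell$, or some $x\notin L_0$ is separated from the sink by $C$ alone, and your argument --- which only ever uses a single $x\in L$ --- gives $|C|\ge\ell$. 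With that reformulation the remainder of your proof goes through verbatim.
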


Before proving the lemma, we record the following simple consequence of Hall's theorem that we need.

\begin{proposition}\label{prop:Hall}
Suppose $G$ is a bipartite graph with vertex sets $U, V$ such that $|N(X)| \ge |X|/2$ for every $X \subset U$. Then there is a set $M \subset E(G)$ with the property that every vertex in $U$ is incident to exactly one edge in $M$, and every vertex in $V$ is incident to at most two edges of $M$.
\end{proposition}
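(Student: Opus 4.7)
The plan is to reduce this to the usual form of Hall's marriage theorem by duplicating vertices on the $V$-side. Concretely, I would form an auxiliary bipartite graph $G'$ with parts $U$ and $V' = V \times \{1,2\}$, where each $u \in U$ is joined to both copies $(v,1)$ and $(v,2)$ whenever $uv \in E(G)$. A matching in $G'$ saturating $U$ projects (by forgetting the second coordinate) to an edge set $M \subset E(G)$ in which every $u \in U$ is covered exactly once and every $v \in V$ is used at most twice, which is exactly the conclusion sought.

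Next I would verify Hall's condition in $G'$. For any $X \subseteq U$, the neighbourhood of $X$ in $V'$ consists of both copies of every vertex in $N_G(X)$, so
\[
|N_{G'}(X)| = 2|N_G(X)| \ge 2 \cdot |X|/2 = |X|,
\]
using the hypothesis. By Hall's theorem applied to $G'$, there exists a matching $M'$ in $G'$ saturating $U$.

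Finally, I would define $M := \{\, uv : u \in U,\ (u,(v,j)) \in M' \text{ for some } j \in \{1,2\}\,\}$. Since $M'$ saturates $U$, every $u \in U$ lies in exactly one edge of $M$. Since $M'$ is a matching, each copy $(v,j) \in V'$ is used at most once, so each $v \in V$ has at most two of its copies used, hence $v$ is incident to at most two edges of $M$. This gives the required $M$.

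There is no real obstacle here: the only subtlety is that the factor of $2$ in the expansion hypothesis exactly compensates for the $2$ copies of each vertex on the $V$-side, and the Hall calculation is immediate.
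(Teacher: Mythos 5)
Your argument is correct and is essentially the paper's own proof: the paper likewise duplicates each vertex of $V$ (adding a twin $v'$ with the same neighbourhood), notes that Hall's condition then holds since $|N_{G'}(X)| = 2|N_G(X)| \ge |X|$, and projects a matching saturating $U$ back to $G$. No issues.
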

\begin{proof}
For every $v \in V$ add a new vertex $v'$ and join $v'$ to all of $v$'s neighbours; call the resulting graph $G'$. Then for every $X \subset U$ we have $|N_{G'}(X)| \ge |X|$, so by Hall's theorem there is a matching of $U$ in $G'$. The result follows by identifying vertices in $V$ with their duplicates.
\end{proof}

\begin{proof}[Proof of \Cref{lem:outnbrs-embedding}]
Combining \Cref{claim:expansion} with \Cref{prop:Hall}, it follows that for each $i \in [t]$ there is a partition $U_i = U_i'\cup U_i''$ such that $U_i'$ and $U_i''$ are both matched into $S_i$. Let $U' = \bigcup_{i=1}^t U_i'$ and $U'' = \bigcup_{i=1}^t U_i''$ so that $U = U' \cup U''$, and fix a directed matching $M'$ from $U'$ to $S$, and a directed matching $M''$ from $U''$ to $S$. Additionally, for each $i \in [\ell]$ let $N_i$ denote the out-neighbourhood of $x_i$ in $T\setminus T_{t+1}$. Observe that some of these $N_i$'s may repeat (as some of the $x_i$'s may repeat among the $\ell$ pairs). Also, since $S \rightarrow T_{t+1}$,
\[
    |N_i| = |N^+(x_i) \cap U| \ge 2\ell,
\]
for each $i = 1, \ldots, \ell$. Let $X' \subset [\ell]$ be those indices $i$ for which $|N_i \cap U'| \ge \ell$ and $X'' = [\ell]\setminus X'$. Note that by our assumption that each $x_i$ has at least $2\ell$ out-neighbours outside of $T_{t+1}$, it follows that $|N_i\cap U''| > \ell$ for each $i \in X''$. Now,  we may pick $|X'| \le \ell$ distinct vertices in $U'$ such that each vertex is an out-neighbour of one of the $x_i$'s with $i \in X'$. Thus we obtain a collection $\cP$ of directed paths of length $3$ by using the appropriate matching edge from $M'$, and the fact that $S \rightarrow \{y_1, \ldots, y_\ell\}$. It remains to find the analogous directed paths joining $(x_i, y_i)$ for $ i \in X''$. Let $A$ denote the set of $|X'|$ vertices in $S$ used in paths in $\cP$. Remove from $U''$ every vertex which is matched by $M''$ to a vertex of $A$; obviously we remove at most $|X'|$ vertices, so each $x_i$ with $i \in X''$ has more than $\ell - |X'| = |X''|$ suitable out-neighbours left in $U''$. Therefore we can pick $|X''|$ distinct vertices in $U''$ with the property that each such vertex is an out-neighbour of one of these $x_i$'s, and use the appropriate matching edges from $M''$ (avoiding $A$) as before. Hence, we have found paths joining all pairs, completing the proof.
\end{proof}

Now we are ready to complete the proof of \Cref{thm:embedding}. Suppose first that we are in Case~(\ref{itm:result-2}) of \Cref{claim:result}; that is, $d_{T_{t+1}}^+(v) < k^2 + 12k^{7/4}$ for every $v \in B$. Then by the minimum out-degree condition on $T$, we have that each vertex in $B$ has more than $k^2$ out-neighbours outside of $T_{t+1}$, and as $k^2 > 2\binom{k}{2}$, \Cref{lem:outnbrs-embedding} implies that we can embed all the required $\binom{k}{2}$ paths.

So we may assume that we are in Case~(\ref{itm:result-1}). Then we have a partial subdivision $\cS = T\ora{K}_k(\ell_1, \ell_2)$ on branch vertex set $B$, where $B$ is $(\alpha, m, k)$-balanced for some $\alpha \ge 1$. As $4(\ell_1 + \ell_2) + 6k^{7/4} > m$, one has
\[
\alpha k^2 + k^{7/4} \le m < 4\binom{k}{2} + 6k^{7/4}, 
\]
so crudely we have $\alpha \le 7$. As we need to embed $\binom{k}{2} - \ell_1 - \ell_2$ more paths, in view of \Cref{lem:outnbrs-embedding} and the minimum out-degree condition $\delta^+(T) \ge 2k^2 + 147k^{7/4}$, we are done provided
\[
    2k^2 + 147k^{7/4} - d^+_{T_{t+1}}(v) \ge 2\left(\binom{k}{2} - \ell_1 -\ell_2\right)
\]
holds for every $v \in B$. But this is true since by \Cref{lem:degrees} we have $d_{T_{t+1}}^+(v) \le m+(20\alpha + 1)k^{7/4} \le  m + 141k^{7/4}$ for every $v \in B$, and so 
\begin{align*}
    2\binom{k}{2} - 2(\ell_1 + \ell_2) + (m + 141k^{7/4}) &< k^2 + 2(\ell_1 + \ell_2) + 147k^{7/4} \\
    &\le 2k^2 + 147k^{7/4}, 
\end{align*}
where the first inequality follows using the bound $m < 4(\ell_1 + \ell_2) + 6k^{7/4}$, and the last inequality holds since always $\ell_1 + \ell_2 \le \binom{k}{2}$. Thus we may embed all remaining paths yielding a $T\ora{K}_k$ in $T$.
\end{proof}

Observe that our proof shows that we can embed a $T\ora{K}_k$ where each path in the subdivision has length at most $3$. We remark that this is best possible in the sense that there exist tournaments with large minimum out-degree which cannot contain copies of $T\ora{K}_k$ where each path has length at most $2$. For example, it is routine to check that a blow-up of a cyclic triangle where each class is a copy of the transitive tournament on $10k^2$ vertices has this property.

\section{Subdivisions of transitive tournaments}\label{sec:transitive}


Our aim in this section is to prove \Cref{thm:trans-subdivision} and \Cref{thm:trans-1-subdivision}. We begin with a lemma similar in spirit to \Cref{lem:degrees}, but which is tailored to our specific needs in this section. To state it, we say that a subset $B \subset V(T)$ is $C$-\emph{nearly-regular} if either $d^-(v) \le d^+(v) \le Cd^-(v)$ for every $v \in B$, or $d^+(v) \le d^-(v) \le Cd^+(v)$ for every $v \in B$. Further, $B$ is $(C, m, k)$-\emph{nearly-regular} if it is $C$-nearly-regular and additionally $d^-(v) \in [m - 10k, m + 10k]$ for every $v \in B$. The following lemma allows us to find $(4, m, k)$-nearly-regular $k$-element subsets in tournaments.

\begin{lemma}\label{lem:ratios}
Any tournament $T$ contains a $4$-nearly-regular subset of size $|T|/10$, and a $(4, m, k)$-nearly-regular subset of size $k$, for some $m$.
 \end{lemma}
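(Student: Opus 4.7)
The plan is to reduce the notion of ``nearly-regular'' to a degree-range condition and then apply \Cref{fact:in-degrees} together with two successive pigeonhole steps. Write $n = |T|$, and observe that $d^+(v) + d^-(v) = n-1$ for every $v \in V(T)$. Under the orientation $d^-(v) \le d^+(v)$, the requirement $d^+(v) \le 4 d^-(v)$ is equivalent to $d^-(v) \ge (n-1)/5$; symmetrically, under the reverse orientation it becomes $d^+(v) \ge (n-1)/5$. So a vertex $v$ can belong to some $4$-nearly-regular set only when $\min(d^+(v), d^-(v)) \ge (n-1)/5$.

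Apply \Cref{fact:in-degrees} with $\ell = \lceil (n-1)/5 \rceil - 1$ to in-degrees and also to out-degrees: in each case at most $2(n-1)/5 + 1$ vertices are ``bad''. Consequently, the set $A$ of vertices with $\min(d^+(v), d^-(v)) \ge (n-1)/5$ has size at least
\[
n - 2\bigl(2(n-1)/5 + 1\bigr) \;=\; n/5 - O(1).
\]
Split $A$ according to whether $d^+(v) \ge d^-(v)$ or $d^-(v) \ge d^+(v)$; the larger class has size at least $|A|/2 \ge |T|/10$ and, by construction, forms a $4$-nearly-regular set. This yields the first assertion.

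For the second assertion, take the $4$-nearly-regular set $B_0$ of size $\ge |T|/10$ just produced. Every $v \in B_0$ has $d^-(v)$ in an interval of length at most $3(n-1)/5$ (namely $[(n-1)/5, (n-1)/2]$ or $[(n-1)/2, 4(n-1)/5]$, depending on which orientation $B_0$ witnesses). Partition this interval into consecutive windows of length $20k$, giving at most $3(n-1)/(100k)+1$ windows. By pigeonhole, some window $[m-10k, m+10k]$ contains at least
\[
\frac{|B_0|}{3(n-1)/(100k) + 1} \;\ge\; \frac{n/10}{3n/(100k) + 1}
\]
vertices of $B_0$, and a direct calculation shows this exceeds $k$ once $n$ is at least a small constant multiple of $k$. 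When $n$ is of order $k$ the argument is easier still: $B_0$ then lies entirely in a single window of length $20k$, so \emph{any} $k$ of its vertices already constitute a $(4,m,k)$-nearly-regular set.

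The only genuine obstacle is bookkeeping of constants, to ensure that the lower bound ``$|T|/10$'' in the first claim is clean rather than merely $\Omega(|T|)$, and that the pigeonhole ratio in the second claim exceeds $k$ uniformly across the range of $n$. These verifications are routine arithmetic, and no combinatorial idea is required beyond the combination of \Cref{fact:in-degrees} with two pigeonhole arguments.
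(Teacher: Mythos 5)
Your proof is correct in substance, but the two halves differ in how closely they track the paper. For the first claim you take a genuinely different route: you translate the ratio condition into the threshold $\min(d^+(v),d^-(v)) \ge (|T|-1)/5$ and then bound the number of ``bad'' vertices directly via two applications of \Cref{fact:in-degrees}, whereas the paper argues by contradiction — assuming the set $R$ of ratio-bounded vertices has size below $|T|/5$, it splits the rest into the classes with $d^+>4d^-$ and $d^->4d^+$ and finds, inside the larger class, a vertex of in-degree at least $|T|/5$, which is incompatible with membership in that class. Your union-bound argument is arguably more direct and reuses a fact the paper already isolates; what it costs is a constant: you get a $4$-nearly-regular set of size about $(|T|-6)/10$ rather than a clean $|T|/10$. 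This is harmless — the paper's own proof is equally cavalier about floors and ceilings (e.g.\ its claim that the larger class contains a vertex of in-degree at least $n/5$ already has rounding slop), and the only application, in the proof of \Cref{thm:trans-subdivision}, has enormous slack since $|T|\ge 150k^2$. For the second claim your argument is essentially the paper's pigeonhole, with two cosmetic differences: the paper partitions all of $[1,|T|]$ into intervals of length $10k$ and gets $\ge 10k\cdot\frac{|A|}{|T|}\ge k$ vertices in one interval with no case distinction, while you partition only the restricted degree range into windows of length $20k$ (the exact width allowed by the definition) and need a separate easy case when $|T|$ is a small multiple of $k$; both versions implicitly require $|T|\gtrsim 10k$, which holds in every application. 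One small inaccuracy: the in-degree range of your set $B_0$ is $[(n-1)/5,(n-1)/2]$ or $[(n-1)/2,4(n-1)/5]$, an interval of length $3(n-1)/10$, not $3(n-1)/5$; since you only use this as an upper bound the pigeonhole still goes through (indeed with more room to spare).
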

 
\begin{proof}
We first claim that $T$ contains a $4$-nearly-regular subset of size at least $|T|/10$. Indeed, let $|T| = n$ and let $R \subset V(T)$ be the vertices for which either the ratio between the out-neighborhood and in-neighbourhood (or vice-versa) is between $1$ and $4$. If $|R|\geq n/5$, then we are done, as we may pass to a subset $A \subset R$ of at least half the size for which the property is satisfied for one or the other. 
If not, then let $T'=T\setminus R$, so that $|T'|\geq 4n/5$. Let $T'_1$ be the set of vertices $v \in V(T')$ for which $d_T^+(v) > 4d_T^-(v)$ and $T'_2$ be those vertices $v \in V(T')$ for which $d_T^-(v) > 4d_T^+(v)$. Suppose without loss of generality that $|T'_1|\geq |T'_2|$, so that $|T'_1|\geq 2n/5$. This implies that there is a vertex $u$ in $T'_1$ which has in-degree inside $T'_1$ at least $n/5$. But then
\[
    n/5 \leq d_T^-(u) < \frac{1}{4}d_T^+(u) \le n/5,
\]
a contradiction.

Thus, we can always find a $4$-nearly-regular subset $A$ of size at least $|T| / 10$. As in the proof of \Cref{lem:degrees}, partition the interval $[1, \ldots, |T|]$ into consecutive intervals of size $10k$, and distribute the vertices of $A$ according to their in-degrees in $T$. By the pigeonhole principle, there must exist at least
\[
    10k\cdot \frac{|A|}{|T|} \ge 10k\cdot \frac{1}{10} = k
\]
vertices in the same interval. These $k$ vertices form a $(4, m, k)$-nearly-regular subset for some $m$.
\end{proof}

We are now in a position to prove \Cref{thm:trans-subdivision}, which we restate here for convenience. The proof is not very different from that of \Cref{lem:dichotomy}: either we can find what we are looking for, or we can pass to a `nice' subtournament which allows us to embed the required subdivision by induction.
\transsubdivision*
\begin{proof}
First, observe that with high probability a uniformly random tournament $T$ on $k^2/10$ vertices does not induce a set of size $k$ whose distance to a transitive tournament is smaller than $k^2/6$. This implies $T$ can not contain a $TT_k$ since any such subdivision must span at least $k^2/6$ vertices.
Let $C=150$. 
We shall apply induction on $k$. 
For $k=2$, the statement holds trivially since any tournament with at least $2$ vertices contains a subdivision of a transitive tournament on $2$ vertices. 
Suppose we want to prove the statement for $k$. 
Let $T$ be a tournament on $Ck^2$ vertices.
Applying Lemma~\ref{lem:ratios} to $T$, we obtain a $(4,m,k)$-nearly-regular set $S\subset T$ consisting of $k$ vertices. Without loss of generality, assume that $d^-(v) \le d^+(v) \le 4d^-(v)$ holds for every $v \in S$. We shall iteratively try to embed a subdivision on these $k$ branch vertices. Observe that we may always choose an ordering $\sigma$ of $S$ for which we just need to embed $\binom{k}{2}/2$ extra paths to find a transitive subdivision. Suppose we are at step $i < \binom{k}{2}$ and we have already found $i$ paths of the subdivision; we may assume $i$ is maximal. Let $P\subset T$ be the set consisting of the inner vertices of the paths already found. Note that $|P|\leq 2\cdot k^2/4$, since each path we have embedded has at most $2$ inner vertices. 
Suppose now we want to find a directed path from $x$ to $y$ (where $x$ lies before $y$ in the ordering $\sigma$ of $S$). By the maximality of $i$, we must have $N^{+}(x)\cap N^{-}(y)\subset P \cup S$, so $|N^+(x) \cap N^-(y)| \le k^2/2 + k \le k^2$. Furthermore, since $S$ is $(4, m,k)$-nearly-regular,
\begin{align*}
    |N^{-}(x)\cap N^{+}(y)| &\le |N^+(x) \cap N^-(y)| + 10k\\
    &\le k^2 + 10k\\
    & \le 4k^2.
\end{align*}

Delete the set $\left (N^{+}(x)\cap N^{-}(y)\right) \cup \left (N^{-}(x)\cap N^{+}(y) \right) $ from $T$ and denote by $T'$ the remaining tournament. Then $T'$ splits into two disjoint sets $A,B$ where $A$ is the common out-neighborhood of $x,y$, and $B$ is the common in-neighborhood of $x,y$.
We claim that the partition $V(T') = A \cup B$ satisfies the following two properties:
\begin{enumerate}
    \item $ |A| + |B| \ge (1 - 1/30)|T|$.
    \item $\min\{ |A|, |B| \} \ge |T|/6$.
\end{enumerate}

To see the the first property, simply observe that we have removed at most $5k^2$ vertices to obtain $T'$, and therefore $|A| + |B| \ge (1 - 1/30)150k^2 = (1 - 1/30)|T|$. To see the second property, since $S$ is $(4, m, k)$-nearly-regular, for every $v \in S$
\[
    d^-(v) \ge d^+(v)/4 = (|T| - d^-(v))/4,
\]
implying $d^-(v) \ge |T|/5$. Then also $d^+(v) \ge d^-(v) \ge |T|/5$. It follows that $\min\{|A|, |B|\} \ge |T|/5 - 5k^2 \ge |T|/6$ (using our choice of $C$), as claimed

Without loss of generality suppose that $|A| \le |B|$. 
 If there is a directed edge from $A$ to $B$, we may find a directed path from $x$ to $y$ of length $3$, which contradicts the maximality of $i$. Accordingly, $B \rightarrow A$. Since $|A| \ge |T|/6 = Ck^2/6 \ge C(2k/5)^2$,
the induction hypothesis guarantees a subdivision of a transitive tournament on $2k/5$ vertices in $T[A]$, where each path has length at most $3$. Similarly, $T[B]$ contains a subdivision of a transitive tournament on $3k/5$ vertices, because $|B|\geq (1/2-1/60)Ck^2\geq C(3k/5)^2$. As $B \rightarrow A$, these two subdivisions may be put together to form a subdivision of a transitive tournament on $k$ vertices where each path has length at most $3$.
\end{proof}

We close this section by proving \Cref{thm:trans-1-subdivision} (recall that a $1$-subdivision of $T_k$ is a subdivision of the transitive tournament of order $k$ where each directed path has length $2$).
\transonesubdivision*

Before proving this theorem we need a lemma. Given a graph $G$ and a vertex $x \in V(G)$ we denote by $B_r(x) = \{v \in V(G): d_G(x, v) \le r \}$ the ball of radius $r$ in $G$ around $x$. The following states that if $G$ has the property that every ball of radius $C\log^2 n$ is small, then $G$ can be disconnected by $o(n)$ vertices into many small components. Recall that $\log n$ denotes the logarithm of $n$ to the base $e$.
\begin{lemma}\label{lem:small-balls}
Let $G$ be an $n$-vertex graph with the property that for any $r \le 10\log^2 n$ and any vertex $x$ we have $|B_r(x)| \le \frac{n}{5\log n}$. Then $G$ contains a set $S\subset V(G)$ of size at most $\frac{n}{5\log n}$ such that $G-S$ is the union of connected components each of which has size at most $\frac{n}{5\log n}$.  
\end{lemma}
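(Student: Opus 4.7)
The plan is to peel off small components iteratively, using BFS spheres as separators. The crux is a growth dichotomy: for any vertex $v$ and any subgraph $H \subseteq G$, letting $a_r := |B_r^H(v)|$, it cannot happen that $a_r/a_{r-1} > 1 + 1/(5\log n)$ for every $r \in \{1, \dots, R\}$ with $R := 10\log^2 n$, since that would force
\[
    a_R > \bigl(1 + \tfrac{1}{5\log n}\bigr)^R \;\ge\; e^{R/(10\log n)} \;=\; n,
\]
contradicting the hypothesis $a_R \le |B_R^G(v)| \le n/(5\log n)$. Hence some sphere $L_r := B_r^H(v) \setminus B_{r-1}^H(v)$ satisfies $|L_r| \le a_{r-1}/(5\log n)$, and $L_r$ separates $B_{r-1}^H(v)$ from the rest of the component of $v$ in $H$.

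Armed with this, I would iterate as follows: set $G_0 := G$, $S := \varnothing$; while $G_i$ has a connected component of size greater than $n/(5\log n)$, pick a vertex $v_i$ in such a component, apply the dichotomy inside $G_i$ (ball sizes in subgraphs never exceed those in $G$, so the hypothesis carries over) to obtain a radius $r_i$ and a sphere $L_i$, adjoin $L_i$ to $S$, and pass to $G_{i+1} := G_i - L_i$. By the separator property, every component of $G_{i+1}$ that meets $B_{r_i - 1}^{G_i}(v_i)$ is contained in that ball and so has size at most $|B_{r_i-1}^{G_i}(v_i)| \le n/(5\log n)$, as required.

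For the accounting, the balls $B_{r_i - 1}^{G_i}(v_i)$ are pairwise disjoint across $i$: after step $i$ each such vertex lies in a component of size at most $n/(5\log n)$ in every subsequent $G_j$ (since removing vertices only shrinks components), whereas each $v_j$ is chosen from a strictly larger component. Hence $\sum_i |B_{r_i-1}^{G_i}(v_i)| \le n$, and therefore
\[
    |S| \;=\; \sum_i |L_i| \;\le\; \frac{1}{5\log n} \sum_i |B_{r_i - 1}^{G_i}(v_i)| \;\le\; \frac{n}{5\log n}.
\]
Termination is immediate since each step strictly reduces the number of vertices in components of size exceeding $n/(5\log n)$.

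The main technical point is calibrating the parameters so that the per-step sphere-to-ball ratio $1/(5\log n)$ and the hypothesis radius $R = 10\log^2 n$ (determined via $\ln(1+x) \ge x/2$ for $x \in [0,1]$) combine to produce exactly the separator budget $n/(5\log n)$; once this balance is set, the remainder is routine BFS bookkeeping together with the trivial monotonicity that passing to a subgraph only shrinks balls.
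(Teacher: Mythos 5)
Your proof is correct and follows essentially the same route as the paper: a BFS-layer dichotomy (either some sphere is a $\frac{1}{5\log n}$-fraction separator or the ball grows geometrically past $n$), applied repeatedly to vertices in components larger than $\frac{n}{5\log n}$. Your explicit disjointness-of-balls accounting for $|S| \le \frac{n}{5\log n}$ merely spells out what the paper dismisses as ``by construction,'' so no substantive difference remains.
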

\begin{proof}
Pick a vertex $x\in V(G)$ and perform a breadth-first-search from $x$, obtaining levels $L_0, L_1, \ldots , L_n$ such that $L_i = \{v \in V(G): d(x, v) = i\}$. Denote by $k=10\log^2 n$.
We claim that there is a $k' < 10\log^2 n$ for which $|L_{k'}| <|B_{k'-1}(x)|/(5\log n)$. If not, for any $k'< 10\log^2 n$, we have 
\[
    |B_{k'}(x)| \ge \left (1 + \frac{1}{5\log n}\right)|B_{k'-1}(x)|,
\]
and hence by induction $|B_k(x)| \ge \left(1 + \frac{1}{5\log n}\right)^k$. Using the inequality $1 + x \ge e^{\frac{x}{x+1}}$ (valid for any $x > - 1$) with $x = 1 + 1/5\log n$ we obtain
\[
    1 + \frac{1}{5\log n} \ge e^{\frac{1}{1 + 5\log n}} \ge e^\frac{1}{10\log n}.
\]
Hence, since $k = 10 \log^2 n$, this yields $|B_k(x)| > (e^{\frac{1}{10\log n}})^{10\log^2 n}>n$, which is clearly a contradiction.

So we may choose $k' < k$ such that $|L_{k'}| < \frac{|B_{k'-1}(x)|}{5\log n}$; remove $L_k$ from $G$. Pick a vertex from each connected component of size larger than $\frac{n}{5\log n}$, and perform the same procedure as above. Eventually this process must terminate, and the components we are left with are all balls of radius less than $10\log^2 n$, so by assumption have at most $\frac{n}{5\log n}$ vertices. Moreover, the union of the sets removed $S$ has size $|S| \le \frac{n}{5\log n}$ by construction, completing the proof of the lemma.
\end{proof}

We are now in a position to prove \Cref{thm:trans-1-subdivision}. The proof goes roughly as follows. We define an auxiliary graph on $V(T)$ where $x \sim y$ if and only if the symmetric difference of their out-neighbourhoods has size less than $2k^2$ (it is helpful to think of this as being `bad' for embedding $1$-subdivisions since, roughly speaking, this implies that $|N^+(x)\setminus N^+(y)| = |N^+(x) \cap N^-(y)|$ is `small'). It turns out that $G$ satisfies the properties required to apply \Cref{lem:small-balls}. So $G$ splits into many small components, and therefore every pair of vertices $x, y$ in different components have $|N^+(x) \Delta N^+(y)| \ge 2k^2$. Moreover, if $d^+(x) \ge d^+(y)$, then it is not hard to show that actually $|N^+(x) \cap N^-(y)| \geq \binom{k}{2}+k$. So order the vertices of the components according to non-increasing out-degree. Finally, we show that enough vertices from the components intersect the first half and second half of the order, enough that we may apply induction to embed a $1$-subdivision of $T_{k/2}$ in each half. Then we can greedily embed the remaining directed paths of length $2$ between these partial $1$-subdivisions.
\begin{proof}[Proof of \Cref{thm:trans-1-subdivision}]

The proof will be by induction on $k$ with $C = 10^7$. For $k\le 3$ the theorem follows: $T$ contains a transitive tournament on at least $\log_2|T| > 6$ vertices, which contains a $1$-subdivision of $T_3$. 
So let $k > 3$ and let $T$ be a tournament with $|T| := n = Ck^{2}\log^3 k$ vertices. Construct an auxiliary graph $G$ on $V(T)$ in the following way: join $x$ to $y$ if $|N^+(x) \Delta N^+(y)| < 2k^2$.
Now, apply \Cref{lem:small-balls} to $G$. To see that $G$ satisfies the property needed for the lemma, suppose there is a vertex $x$ which sees at least $\frac{n}{5\log n}$ vertices in the ball $B_r(x)$ of radius $r = 10\log^2 n$. It is not hard to check that $\log n \le 20\log k$. Now, there is a path in $G$ of length at most $20\log^2 n \le 8000\log^2 k$ between every pair of vertices in $B$. It follows that every such pair has the property that the symmetric difference between their out-neighborhoods is at most $16000k^2\log^2 k$, by the definition of $G$. But this is impossible because $B_r(x)$ has order at least $n/5\log n \ge 10^5k^2\log^2 k$. By looking at the tournament $T' = T[B_r(x)]$, we observe that it contains a vertex $y$ whose out-neighborhood $N_{T'}^{+}(y)$ has size at least $(10^5k^2\log^2 k) / 2 = 50000k^2 \log^2 k$ , and by the same reasoning, inside the tournament induced on $N_{T'}^{+}(y)$ there must exist a vertex $z$ whose in-neighborhood has size at least $25000k^2\log^2 k$. Accordingly, in $T$ we have
\[
    |N^+(y)\Delta N^+(z)| \ge 25000k^2\log^2 k.
\]
But this contradicts the fact that we must have $|N^+(y)\Delta N^+(z)| \le 16000k^2\log^2 k$, as we established earlier.

So we may apply \Cref{lem:small-balls} to $G$. This yields a set $S$ of vertices such that $|S| \le \frac{n}{5\log n}$ and $G' = G - S$ consists of connected components $C_1,C_2,\ldots,C_t$. 
We claim that for any two vertices belonging to different components, there are many directed paths of length two between them.
\begin{claim} \label{claim:many_paths}
Let $u, v$ be any two vertices belonging to different components such that $d^+(u) \geq d^+(v)$.
Then $|N^+(u) \cap N^-(v)| \geq k^2 - 1 \geq \binom{k}{2}+k$.
\end{claim}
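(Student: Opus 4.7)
The plan is to exploit two things: first, that $u$ and $v$ lie in different connected components of $G' = G - S$, and so in particular $uv \notin E(G)$; and second, the degree assumption $d^+(u) \geq d^+(v)$. The first gives a lower bound on the symmetric difference $|N^+(u) \Delta N^+(v)|$, and the second says that the $(u,v)$ part of the symmetric difference is no smaller than the $(v,u)$ part, so one gets a lower bound on $|N^+(u) \cap N^-(v)|$ that is essentially half of the symmetric difference.

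More concretely, first I would decompose the symmetric difference. Since $T$ is a tournament, every vertex $w \neq u,v$ lies in exactly one of $N^+(v), N^-(v)$ and in exactly one of $N^+(u), N^-(u)$, and moreover exactly one of $u \in N^+(v)$ or $v \in N^+(u)$ holds. A direct case analysis then gives
\[
|N^+(u) \Delta N^+(v)| \;=\; |N^+(u) \cap N^-(v)| \;+\; |N^+(v) \cap N^-(u)| \;+\; 1,
\]
where the final $+1$ accounts for whichever of $u,v$ beats the other (that vertex lies in exactly one side of the symmetric difference). Since $uv$ is a non-edge of $G$, we have $|N^+(u) \Delta N^+(v)| \geq 2k^2$, so the two cap-intersection terms sum to at least $2k^2 - 1$.

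Next I would compare the two intersection sizes using the out-degree hypothesis. Writing $N^+(u) \cap N^-(v) = N^+(u) \setminus (N^+(v) \cup \{v\})$ and the analogous identity for $N^+(v) \cap N^-(u)$, subtracting yields
\[
|N^+(u) \cap N^-(v)| - |N^+(v) \cap N^-(u)| \;=\; \bigl(d^+(u) - d^+(v)\bigr) \;\pm\; 1,
\]
where the $\pm 1$ again records which of $u,v$ beats the other. Since $d^+(u) \geq d^+(v)$, this gives $|N^+(u) \cap N^-(v)| \geq |N^+(v) \cap N^-(u)| - 1$. Plugging this into the symmetric-difference bound from the previous paragraph yields $2\,|N^+(u) \cap N^-(v)| + 1 \geq 2k^2 - 1$, i.e.\ $|N^+(u) \cap N^-(v)| \geq k^2 - 1$. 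A quick check using $(k-2)(k+1) \geq 0$ for $k \geq 2$ confirms that $k^2 - 1 \geq \binom{k}{2} + k$, completing the proof.

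There is no serious obstacle here; the only thing to be careful about is correctly accounting for the one-vertex corrections arising from the edge between $u$ and $v$ when passing between out-neighbourhoods, symmetric differences, and cross-intersections.
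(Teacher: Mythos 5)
Your argument is correct and is essentially the same as the paper's: you use the identity $|N^+(u)\,\Delta\, N^+(v)| = |N^+(u)\cap N^-(v)| + |N^+(v)\cap N^-(u)| + 1$ together with the non-adjacency of $u,v$ in $G$ to bound the sum by $2k^2-1$, and the hypothesis $d^+(u)\ge d^+(v)$ to get $|N^+(u)\cap N^-(v)| \ge |N^+(v)\cap N^-(u)| - 1$, exactly as in the paper. Your extra verification that $k^2-1 \ge \binom{k}{2}+k$ via $(k-2)(k+1)\ge 0$ is a welcome detail the paper leaves implicit.
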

\begin{proof}

Observe first that as $u,v$ belong to different components, they satisfy 
\[
  |N^+(u) \cap N^-(v)| + |N^+(v) \cap N^-(u)| = |N^+(u) \Delta N^+(v)| - 1 \geq 2k^2 - 1.
\]
Moreover, since $d^+(u) \geq d^+(v)$ we have that $|N^+(u) \cap N^-(v)| \geq |N^+(v) \cap N^-(u)| - 1$.
Therefore $|N^+(u) \cap N^-(v)| \geq (2k^2-2)/2 = k^2 - 1$, as claimed.
\end{proof}
 
Let $\sigma$ be an ordering of the vertices in $V(G')$ so that their out-degrees are non-increasing.
Furthermore, let 
$m = |V(G')| = |V(G - S)| \ge (1 - \frac{1}{5\log n})n$. We assume that $m = (1 - \frac{1}{5\log n})n$ by possibly removing some vertices from $G'$. Let $A_1$ denote the initial segment (according to $\sigma$) of $V(G')$ with $\lfloor{m/2}\rfloor$ vertices, and $A_2$ the remaining $\lceil{m/2}\rceil$ vertices. The following lemma allows us to partition the components $C_1, \ldots, C_t$ into two families $\cX$ and $\cY$ such that the components in $\cX$ intersect $A_1$ in a set $X$ of `many' vertices, and the components in $\cY$ intersect $A_2$ in a set $Y$ of `many' vertices. By \Cref{claim:many_paths} and the definition of the ordering $\sigma$, we guarantee that there are many directed paths of length two between each pair $x, y$ with $x \in X$ and $y \in Y$. Thus if $X$ and $Y$ are large enough, we may apply induction to $T[X]$ and $T[Y]$, and then embed the remaining paths in-between greedily. To spell out the details more carefully:

\begin{lemma}\label{lem:induction-step}
There exists a partition $\cX \cup \cY = \{C_1, \ldots, C_t\}$ of 
the components such that the following holds. If $X = \bigcup \cX$ and $Y = \bigcup \cY$, then
\[
    |X\cap A_1| \ge \left(1 - \frac{1}{2\log{n}}\right)m/4\,\, \text{ and }\,\, |Y \cap A_2| \ge \left(1 - \frac{1}{2\log n}\right)m/4.
\]
\end{lemma}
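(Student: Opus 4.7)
The plan is a greedy partition argument exploiting the size bound $|C_i| \le n/(5\log n)$ given by Lemma~\ref{lem:small-balls}. Set $a_i := |C_i \cap A_1|$ and $b_i := |C_i \cap A_2|$, so that $a_i + b_i = |C_i| \le n/(5\log n)$, while $\sum_i a_i = \lfloor m/2\rfloor$ and $\sum_i b_i = \lceil m/2\rceil$. I would first sort the components in non-increasing order of $a_i - b_i$, placing the most ``$A_1$-biased'' components up front, and then traverse this sorted list, adding each component to $\mathcal{X}$ while the running sum $\alpha = \sum_{i\in\mathcal{X}} a_i$ stays below $m/4$, and assigning every remaining component to $\mathcal{Y}$ once $\alpha$ first meets or exceeds $m/4$.

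To verify the two conclusions, I would argue for each side separately. For $|X \cap A_1|$: the stopping rule yields $\alpha \ge m/4$ immediately, and the component-size bound controls the overshoot as $\alpha < m/4 + n/(5\log n)$. For $|Y \cap A_2| = \lceil m/2\rceil - \beta$ where $\beta := \sum_{i\in\mathcal{X}} b_i$: the sort order places the largest values of $a_i - b_i$ in $\mathcal{X}$, and since the global total $\sum_i (a_i - b_i)\in\{-1,0\}$, a simple averaging argument shows that $\sum_{i\in\mathcal{X}}(a_i - b_i) \ge -1$. Therefore $\beta \le \alpha + 1$, which combined with the upper bound on $\alpha$ gives $|Y \cap A_2| \ge m/4 - n/(5\log n) - 1$.

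The hard part will be matching the error term $n/(5\log n)$ against the tolerance $m/(8\log n)$ allowed in the conclusion; these quantities are of the same order, so the naive greedy above falls just short of the stated bound by a small constant factor. I expect to close this gap by one of two refinements: either by lowering the stopping threshold below $m/4$ by a carefully chosen $O(n/\log n)$ amount so that the resulting symmetric slack on the two sides absorbs the overshoot, or by performing a finishing swap that moves a ``boundary'' component (one with $a_i - b_i$ close to zero) between $\mathcal{X}$ and $\mathcal{Y}$ in order to tighten whichever of the two bounds turns out to be the weaker. The essential quantitative input is that each component has size at most $\Theta(n/\log n)$ while the total mass $m$ is a full logarithmic factor larger, which provides precisely the granularity needed to balance the two partial sums to the demanded precision.
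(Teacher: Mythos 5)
Your route is genuinely different from the paper's, and it does work --- but only once the refinement you leave as a sketch is actually carried out, so let me confirm it closes and compare. The paper classifies each component by which half it mostly lives in ($|C_i\cap A_1|\ge |C_i|/2$ or not), dispatches the easy case where both majority-unions already have size at least $m/4$, and otherwise augments the $A_1$-majority family by a maximal subfamily of $A_2$-majority components subject to the accumulated $A_1$-mass staying at most $m/4$; maximality makes the overshoot at most $|C_j|/2\le n/(10\log n)$ per component, which fits inside the one-sided tolerance $m/(8\log n)$, and the $A_2$-side bound comes for free because the leftover components are $A_2$-majority. You instead sort whole components by the bias $a_i-b_i$ and take a greedy prefix; your averaging step is correct (for a non-increasing sequence the prefix sum is at least $\tfrac{|\cX|}{t}\sum_i(a_i-b_i)\ge -1$, hence $\beta\le\alpha+1$), but since whole components are moved the overshoot can reach $n/(5\log n)$, so with threshold exactly $m/4$ the $Y$-side misses the stated bound, exactly as you observe. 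Your first proposed fix does the job: stop the first time $\alpha\ge\tau$ with $\tau=m/4-\tfrac{n}{10\log n}$. Then $|X\cap A_1|=\alpha\ge m/4-\tfrac{n}{10\log n}=\bigl(1-\tfrac{3}{5\log n}\bigr)n/4\ge\bigl(1-\tfrac{1}{2\log n}\bigr)m/4$ (this is literally the inequality the paper verifies), while $\alpha<\tau+\tfrac{n}{5\log n}=m/4+\tfrac{n}{10\log n}$ and $\beta\le\alpha+1$ give $|Y\cap A_2|=\lceil m/2\rceil-\beta\ge m/4-\tfrac{n}{10\log n}-1\ge\bigl(1-\tfrac{1}{2\log n}\bigr)m/4$, the last step needing only $\tfrac{n}{40\log n}-\tfrac{n}{40\log^2 n}\ge 1$, which is ample since $n\ge Ck^2\log^3k$. (One should also note $\cY\neq\varnothing$, which is immediate since $\beta\le\alpha+1<\lceil m/2\rceil$.) The quantitative reason both proofs succeed is the same: the combined tolerance, roughly $n/(4\log n)$, exceeds the component granularity $n/(5\log n)$; the paper buys the constant via the half-component trick and keeps one side exactly at $m/4$, whereas you buy it by splitting the tolerance across the two sides, at the price of the sorted-prefix averaging needed to transfer the overshoot from $A_1$ to $A_2$.
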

\begin{proof}
Let $C^{1}_i=C_i\cap A_1$ and similarly $C^{2}_i=C_i \cap A_2$. 
Denote by $C_1,C_2,\ldots, C_{t'}$ the set of connected components for which $|C^{1}_i|\geq |C_i|/2$; this implies that $|C^{2}_{t'+\ell}|\geq |C_{t'+\ell}|/2$, for every $\ell\in [t-t']$. 
If $|\cup_{i=1}^{t'}C^{1}_i|\geq m/4$ and also $|\cup_{i=t'}^{t}C^{2}_i|\geq m/4$, then we may take the partition $\cX = \{C_i : i \in [t']\}$ and $\cY = \{C_i: i \in [t]\setminus [t']\}$. So assume that $|\cup_{i=1}^{t'}C^{1}_i|< m/4$. Choose a set $B \subset \{t'+1,\ldots, t\}$ (perhaps empty) as large as possible for which 
\[
L= \left|\bigcup_{j \in [t']\cup B} C^{1}_j \right| \leq m/4.
\]
Recall that $|C_i| \le n/5\log n$ for each $i \in [t]$. Thus if $j \in [t]\setminus (B \cup [t'])$, then the maximality of $B$ implies that 
\[
    m/4 < L + |C^1_j| \le L + |C_j|/2 \le L + \frac{n}{10\log n},
\]
and using $m = (1 - \frac{1}{5\log n})n$, we have
\begin{align*}
    L \ge m/4 - \frac{n}{10\log n} &= \left(1 - \frac{3}{5\log n}\right)n/4\\
    &\ge \left(1 - \frac{1}{2\log n}\right)\left(1 - \frac{1}{5\log n}\right)n/4\\
    &= \left(1 - \frac{1}{2\log n}\right)m/4.
\end{align*}
Moreover, by assumption we must have that $|\cup_{j\in \{t'+1,\ldots, t\}\setminus B} C^{2}_j|\geq m/4> \left(1 - \frac{1}{2\log n}\right)m/4$. Hence the partition $\cX = \{C_i : i \in [t']\cup B\}$ and $\cY = \{C_j : j \in \{t'+1, \ldots, t\} \setminus B\}$ satisfies the conclusion of the lemma.
\end{proof}

Thus \Cref{lem:induction-step} furnishes $\cX$ and $\cY$ with the stated properties. Consider $T' = T[X\cap A_1]$ and $T'' = T[Y\cap A_2]$. The following claim asserts that these subtournaments are large enough to apply induction. The proof is routine calculation. In the following, recall that $C = 10^7$.
\begin{claim}\label{claim:induction-hypothesis}
Both $T'$ and $T''$ have size at least $C\left(\frac{k}{2}\right)^2 \log^3\left(\frac{k}{2}\right)$.
\end{claim}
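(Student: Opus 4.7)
The plan is to chain together the explicit lower bounds obtained in the preceding steps and then verify an elementary log-inequality. Recall that $n = Ck^2 \log^3 k$ with $C = 10^7$, $m = (1 - 1/(5\log n))n$, and by \Cref{lem:induction-step} we have both $|T'|, |T''| \ge (1 - 1/(2\log n)) \cdot m/4$. Substituting and dividing by $C/4$, the claim reduces to checking that
\[
\left(1 - \frac{1}{2\log n}\right)\left(1 - \frac{1}{5\log n}\right) \log^3 k \;\ge\; \log^3(k/2).
\]

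Since $n \ge k^2$, we have $\log n \ge 2\log k$, so the left-hand factor is at least $1 - 7/(20\log k) + O(1/\log^2 k)$. Meanwhile, the right-hand side divided by $\log^3 k$ equals $(1 - (\log 2)/\log k)^3$, which is at most $1 - 3(\log 2)/\log k + O(1/\log^2 k)$. Because $3\log 2 \approx 2.08$ handily exceeds $7/20 = 0.35$, the inequality holds with substantial slack for all $k > 3$. (As a sanity check at the smallest relevant value $k = 4$: $\log n \approx 19.9$, so the left-hand factor is about $0.965$, whereas the target ratio is $\log^3 2 / \log^3 4 = 1/8 = 0.125$.)

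I do not anticipate any real obstacle: the arithmetic is routine, and the constant $C = 10^7$ was chosen sufficiently large precisely so that this kind of slack propagates through every inductive step. The only mildly subtle point is parity — for odd $k$ the quantity $k/2$ should be read as $\lceil k/2 \rceil$ on one side and $\lfloor k/2 \rfloor$ on the other, so that the two partial $1$-subdivisions combine into one on $k$ branch vertices. Since $\lceil k/2 \rceil \le (k+1)/2$, the same calculation goes through verbatim, and the generous choice of $C$ absorbs this minor loss without any modification.
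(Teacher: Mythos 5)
Your proposal is correct and follows essentially the same route as the paper: substitute the bound from \Cref{lem:induction-step}, reduce to the inequality $\left(1 - \tfrac{1}{2\log n}\right)\left(1 - \tfrac{1}{5\log n}\right)\log^3 k \ge \log^3(k/2)$, use $\log n \ge 2\log k$, and finish by comparing $7/20$ with $\log 2$ (the paper takes cube roots and uses $\log 2 > 7/20$, while you expand the cube and use $3\log 2 > 7/20$ — a trivial variation, though you should bound the positive $O(1/\log^2 k)$ term explicitly, e.g.\ via $3x^2 \le \tfrac{3}{2}x$ for $x = \log 2/\log k \le \tfrac12$, rather than leave it asymptotic). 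The parity remark about $\lceil k/2\rceil$ versus $\lfloor k/2\rfloor$ is a sensible aside but concerns the application of the claim, not its proof.
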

\begin{proof}
We prove this for $T'$; the proof for $T''$ is identical. By \Cref{lem:induction-step} we have that 
\[
    |T'| \ge \left(1 - \frac{1}{2\log{n}}\right)m/4 \ge \left(1 - \frac{1}{2\log{n}}\right)\left(1 - \frac{1}{5\log n}\right)(Ck^2/4)\log^3 k,
\]
so in order to show that $|T'| \ge C(k/2)^2\log^3(k/2)$ we must prove that 
\begin{equation}
    \left(1 - \frac{1}{2\log{n}}\right)\left(1 - \frac{1}{5\log n}\right)\log^3 k \ge \log^3\left(\frac{k}{2}\right). \label{eqn:nasty}
\end{equation}
Expanding the left-hand side, one sees that the left-hand side of (\ref{eqn:nasty}) is at least
\[
    \left(1 - \frac{7}{10\log n}\right)\log^3 k \ge \left(1 - \frac{7}{20 \log k}\right)\log^3 k,
\]
where we have used the inequality $\log n \ge \log(k^2) = 2 \log k$. Now it is not difficult to check that $(1 - 7/(20 \log k))\log^3 k \ge \log^3(k/2) = ( \log k - \log 2)^3$. Indeed, after taking cube roots and cancelling the $\log k$ factor, this is equivalent to showing that
\[
    \left(1 - \frac{7}{20\log k}\right)^{1/3} \ge 1 - \frac{\log 2}{\log k},
\]
which is easily seen to be true since $\log 2 > 7/20$. Accordingly, (\ref{eqn:nasty}) holds.
\end{proof}

So by the previous claim, $T'$ and $T''$ are large enough to find a $1$-subdivision of $T_{k/2}$ in each of them, by induction; denote the branch vertex set of each of these by $B', B''$, respectively. 
As $B'$ lies entirely before $B''$ in the ordering $\sigma$, for every $x \in B'$, $y \in B''$ we have that $d^+(x) \geq d^-(y)$.
Hence it follows from Claim~\ref{claim:many_paths} that $|N^+(x) \cap N^-(y)| \geq \binom{k}{2}+k$ and therefore we may greedily embed directed paths of length $2$ between every such pair. 
Thus we have found a $1$-subdivision of $T_k$ in $T$.
\end{proof}

\section{Concluding remarks and open problems}\label{sec:final}

Observe that our methods for embedding subdivisions of complete digraphs straightforwardly generalize to obtain the following result concerning embedding subdivisions of general digraphs in tournaments with large minimum out-degree.
\begin{theorem}\label{thm:generalization}
There exists an absolute constant $C > 0$ such that the following holds.
Let $D$ be a digraph with $m$ edges and no isolated vertices, and suppose $T$ is a tournament with $\delta^+(T) \ge Cm$. Then $T$ contains a subdivision of $D$. Moreover, each edge is subdivided at most two times.
\end{theorem}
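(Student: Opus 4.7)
The plan is to run the entire proof of \Cref{thm:embedding} essentially verbatim, with the observation that the structural arguments in \Cref{lem:degrees}, \Cref{lem:dichotomy}, and \Cref{lem:outnbrs-embedding} never used the fact that the target digraph was $\ora{K}_k$: they only care about the \emph{number} of internally disjoint paths to be embedded and the size of the branch vertex set. Let $k := |V(D)|$; since $D$ has no isolated vertices we have $k \le 2m$, and we need to embed $m$ internally disjoint directed paths among $k$ prescribed branch vertices. So all counts that were driven by $\binom{k}{2}$ in the original proof are now driven by $m$, and the bound $\delta^+(T) \ge Cm$ plays the role of $\delta^+(T) \ge 2k^2 + 147k^{7/4}$.

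First, I would prove a direct analogue of \Cref{lem:degrees} giving a $k$-subset $B \subset V(T)$ whose in-degrees all lie in an interval of length $k^{7/4}$ (still using the pigeonhole with intervals of length $k^{7/4}$), and fix a bijection $\phi : V(D) \to B$. The requirement $|T| \gtrsim \alpha m$ for some $\alpha \ge 1$ is guaranteed because $\delta^+(T) \ge Cm$ gives $|T| \ge Cm \ge (C/2)k$, and $k \le 2m$ lets us keep the ratio $|T|/k^2$ at least of order $1$. Next, the dichotomy of \Cref{lem:dichotomy} transfers: list the $m$ edges of $D$ as $e_1, \dots, e_m$ (interpreted as pairs in $B$ via $\phi$), attempt to embed them one by one as paths of length $2$ or $3$, and either succeed with many such short paths or find an edge $e_f = xy$ that fails. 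The swap-argument that shows $N^+(x) \cap N^-(y) \subset A \cup B$ (where $A$ is the set of length-$2$ inner vertices) depends only on the embedding protocol, not on the identity of the other edges, and yields a cut set $U$ whose source component $S$ satisfies $|S| \ge |U| + k$ under the condition $4(\ell_1+\ell_2) + 6k^{7/4} > m$.

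Then I would run exactly the iteration in the proof of \Cref{thm:embedding}: either the procedure terminates with a partial subdivision $T\ora{D}(\ell_1,\ell_2)$ satisfying $4(\ell_1+\ell_2)+6k^{7/4} > m$, or it reaches a small subtournament $T_{t+1}$ inside which $k$ vertices of $B$ each have at most $k^2 + O(k^{7/4})$ out-neighbours, leaving at least $Cm - k^2 - O(k^{7/4})$ out-neighbours inside $T \setminus T_{t+1}$. In either case, \Cref{lem:outnbrs-embedding} lets us route the remaining (at most $m$) prescribed directed paths through the concatenation of cut sets $U_i$ and source components $S_i$, provided the number of external out-neighbours of each branch vertex is at least twice the number of paths still to embed. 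Choosing $C$ large enough (say $C = 300$, matching the $2k^2 + 147k^{7/4}$ bound after using $k \le 2m$) makes this work.

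The main obstacle is not conceptual but bookkeeping: the error terms in \Cref{lem:degrees} and the arithmetic at the end of \Cref{thm:embedding} are phrased in terms of $k$, so one must check that after substituting $k \le 2m$ the polynomial $k^{7/4}$ corrections remain lower order compared to $Cm$, so that a single absolute constant $C$ suffices uniformly in $D$. A minor subtlety is that different edges of $D$ incident to the same branch vertex must share its out-neighbourhood budget; however \Cref{lem:outnbrs-embedding} is already stated for arbitrary prescribed pairs $(x_i, y_i)$ with possible repetitions on the left, and Hall's-theorem-based \Cref{prop:Hall} handles the matching into $S$ regardless of which pairs are requested, so this causes no difficulty.
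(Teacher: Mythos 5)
Your overall plan---rerun the proof of \Cref{thm:embedding} with the $\binom{k}{2}$ paths replaced by the $m$ edges of $D$ and a branch set of size $k=|V(D)|\le 2m$---is exactly the adaptation the paper intends (the paper states \Cref{thm:generalization} as an observation and gives no separate proof). However, there is a genuine quantitative gap in your execution: you keep the error terms $k^{7/4}$ from \Cref{lem:degrees} and \Cref{lem:dichotomy} and assert that after substituting $k\le 2m$ these corrections ``remain lower order compared to $Cm$''. That is false in precisely the regime the theorem must cover: for sparse $D$ (an orientation of a matching or a path, say) one has $k=\Theta(m)$, hence $k^{7/4}=\Theta(m^{7/4})\gg Cm$. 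Concretely, if $T$ is a regular tournament on $2Cm+1$ vertices then $|T|=\Theta(m)$, so no $\alpha\ge 1$ with $|T|\ge 2\alpha k^2+(20\alpha+4)k^{7/4}$ exists, the in-degree window of width $k^{7/4}$ is wider than $|T|$ itself, the dichotomy condition $4(\ell_1+\ell_2)+6k^{7/4}>m$ (where that $m$ is the in-degree level of the balanced set, not $e(D)$ --- you conflate the two) holds vacuously with $\ell_1=\ell_2=0$, and the closing inequality of \Cref{thm:embedding}, which now needs $2(\text{remaining paths})+\mu+O(k^{7/4})\le\delta^+(T)$, cannot hold under the hypothesis $\delta^+(T)\ge Cm$. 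As written, your argument proves the theorem only under the stronger hypothesis $\delta^+(T)\ge C\left(m+|V(D)|^{7/4}\right)$; in particular your ``$C=300$, matching $2k^2+147k^{7/4}$ after $k\le 2m$'' does not follow, since $2k^2$ alone can be of order $m^2$.

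The repair is routine but it is exactly the step you wave away: the error scale must be re-chosen as a function of $m$, not of $k$. Replace the main term $\alpha k^2$ by $\alpha\cdot\Theta(m)$ and the window $k^{7/4}$ by $E=\Theta(m)$ throughout. This is legitimate because the only properties of $k^{7/4}$ the argument uses are that it exceeds a few multiples of $k$ (fine, as $k\le 2m$) and that the pigeonhole in \Cref{lem:degrees} returns $k$ vertices in one window, which requires roughly $E^2\gtrsim k\cdot\Theta(m)$, again satisfied by $E=\Theta(m)$ with suitable constants. With that re-tuning, your iteration together with \Cref{lem:outnbrs-embedding} goes through as you describe (your remarks about repeated $x_i$'s and \Cref{prop:Hall} are correct), each path has length at most $3$, and one obtains an absolute constant $C$ with $\delta^+(T)\ge Cm$ sufficient; without it, the final arithmetic fails whenever $k\gg m^{4/7}$.
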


Recall that $d(k)$ is the minimum $m$ such that any tournament $T$ with $\delta^+(T) \ge m$ contains a subdivision of $\ora{K}_k$. We have determined $d(k)$ up to a factor of $8$, and it is natural to ask whether or not the trivial lower bound is the correct answer.

\begin{question}
Is it true that $d(k) = \left(\frac{1}{4} + o(1)\right) k^2$?
\end{question}

Earlier, we mentioned that Alon, Krivelevich and Sudakov~\cite{AlonKrivSudak} proved that any graph on $n$ vertices and with at least $\varepsilon n^2$ edges contains a $1$-subdivision of a complete graph on $c(\varepsilon)\sqrt{n}$ vertices.
We conjecture that the following analogue for tournaments is true (recall that $T_k$ denotes the transitive tournament on $k$ vertices).

\begin{conjecture}\label{conj:1-subdivision}
There is a constant $C>0$ such that any tournament with at least $Ck^2$ vertices contains a $1$-subdivision of $T_k$. 
\end{conjecture}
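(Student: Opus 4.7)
The plan is to prove \Cref{conj:1-subdivision} by adapting the dependent random choice (DRC) argument of Alon--Krivelevich--Sudakov from the undirected setting to tournaments, with the objective of bypassing the iterated component decomposition responsible for the $\log^3 k$ overhead in \Cref{thm:trans-1-subdivision}.

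First, I would use \Cref{lem:ratios} iteratively to pass to a sub-tournament $T' \subseteq T$ of size $\Omega(n)$ in which every vertex $v$ satisfies $d^+(v), d^-(v) = \Theta(|T'|)$. I would then sample a random subset $S_0 \subseteq V(T')$ of constant size $t$ and set
\[
U^+ := \{v \in V(T')\sm S_0 : S_0 \subseteq N^-(v)\}, \qquad U^- := \{v \in V(T')\sm S_0 : S_0 \subseteq N^+(v)\}.
\]
By Jensen's inequality and the balanced-degree hypothesis, $\mathbb{E}|U^\pm| \gtrsim |T'|/3^t$. Crucially, for every $(u,v) \in U^- \times U^+$ and every $s \in S_0$, the vertex $s$ is an intermediary of the directed path $u \to s \to v$, so $|N^+(u) \cap N^-(v)| \ge t$ automatically.

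Next, I would show that ``bad'' ordered pairs $(u,v)$ with $|N^+(u) \cap N^-(v)| < k^2$ are rare inside $U^- \times U^+$: the probability that a fixed bad pair lies in the product is at most $(k^2/|T'|)^t$, and summing over pairs gives an expected bad count of at most $k^{2t}|T'|^{2-t}$. With $|T'| = \Omega(k^2)$ and $t$ moderate, a greedy removal of vertices of high bad-degree should produce $\widetilde{U}^\pm$ of size $\Omega(|T'|)$ in which every cross pair admits $\ge k^2$ intermediaries. From these one picks $k/2$ branch vertices on each side, orders $\widetilde{U}^-$ before $\widetilde{U}^+$, and invokes Hall's theorem to extract the $k^2/4$ distinct intermediaries required for cross pairs.

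The main obstacle is twofold. The deeper difficulty is handling pairs internal to $\widetilde{U}^+$ or to $\widetilde{U}^-$: a single round of DRC gives no control over $|N^+(u) \cap N^-(v)|$ when $u$ and $v$ are on the same side. The naive fix---recursing a second round of DRC within each class---loses a constant factor of size per level and produces a recursion of depth $\log k$, exactly the mechanism responsible for the $\log^3 k$ overhead one is trying to eliminate. The second obstacle is more quantitative: a careful computation shows that, with $t = O(1)$ and $|T'| = Ck^2$, the expected bad-pair count is of order $k^4/C$, comparable to the square of the target branch-set size. Standard vertex-removal arguments then fail to trim $\widetilde{U}^\pm$ down to $k$ vertices unless $C$ is allowed to grow with $k$. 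Overcoming either difficulty likely requires a genuinely new ingredient: either a two-sided DRC variant that simultaneously controls all four types of pairs (cross and within, in both orientations), or a tournament-specific concentration result showing that random subsets of a regular tournament enjoy pairwise large path-counts in both directions. In my view, developing such a tool is the principal difficulty of the conjecture, and any proof avoiding the $\log$ losses almost certainly passes through a new structural result of this flavour.
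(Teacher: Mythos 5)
The statement you were asked to prove is not a theorem of the paper but an open problem: \Cref{conj:1-subdivision} is stated as a conjecture, the paper only establishes the weaker \Cref{thm:trans-1-subdivision} with an extra $\log^3 k$ factor, and the authors explicitly say that new ideas seem to be needed. So there is no proof in the paper to compare against, and -- as you yourself concede in your final paragraph -- your proposal is not a proof either. The gap is exactly where you locate it, and it is fatal in the current form: one round of dependent random choice only controls ordered pairs $(u,v)\in U^-\times U^+$, i.e.\ it gives many intermediaries for \emph{cross} pairs, but says nothing about pairs lying inside the same class, and a transitive $1$-subdivision needs $|N^+(x)\cap N^-(y)|$ large for \emph{every} ordered pair of branch vertices consistent with the transitive order. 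Recursing DRC inside each class to fix this loses a constant fraction of vertices per level over $\Theta(\log k)$ levels, which is structurally the same mechanism (a depth-$\log k$ recursion splitting into two halves) that produces the polylogarithmic loss in the paper's proof of \Cref{thm:trans-1-subdivision}; so the route as described cannot beat $Ck^2$ without the ``genuinely new ingredient'' you mention. Your quantitative accounting also does not close: with $t=O(1)$ and $|T'|=Ck^2$ the expected number of bad pairs is of order $k^{2t}|T'|^{2-t}\approx k^4/C$, which is too large for the high-bad-degree deletion step to leave sets from which $k$ branch vertices with pairwise $\ge\binom{k}{2}+k$ intermediaries can be extracted, unless $C$ grows with $k$ -- defeating the purpose.

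For context, your approach is genuinely different from the paper's proof of the weaker \Cref{thm:trans-1-subdivision}: there the authors build an auxiliary graph joining $x$ to $y$ when $|N^+(x)\,\Delta\, N^+(y)|<2k^2$, use \Cref{lem:small-balls} to cut it into components of size $o(n)$, order the surviving vertices by out-degree, and recurse on two halves (\Cref{lem:induction-step}, \Cref{claim:many_paths}); the $\log^3 k$ arises from that recursion combined with the ball-growth argument. A DRC-style attack is a sensible alternative direction, but until the same-side-pair problem and the bad-pair counting are resolved it yields no bound at all, let alone $Ck^2$. As it stands the proposal should be regarded as a research plan, not a proof of \Cref{conj:1-subdivision}.
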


Our Theorems~\ref{thm:trans-subdivision} and~\ref{thm:trans-1-subdivision} provide some evidence for this conjecture. Yet, it seems new ideas are needed to resolve the conjecture in full.

\bibliographystyle{amsplain}
\bibliography{tournament-subdivisions.bib}

\end{document}